\documentclass{article}
\usepackage[utf8]{inputenc}
\usepackage{a4wide}
\usepackage[UKenglish]{babel}
\usepackage{graphicx}
\usepackage{subfigure}
	\graphicspath{{./}{figures/}}
\usepackage[colorlinks=true,linkcolor=blue,citecolor=red]{hyperref}
\usepackage{amssymb,empheq,bbold}
\usepackage[inline]{enumitem}
\usepackage{geometry}
\usepackage{caption}
\usepackage{amsthm}
\usepackage{bbm}
\usepackage[title]{appendix}
\usepackage{amsmath,amsthm,bm}
\usepackage{amsfonts}
\usepackage{algorithmic}
\usepackage{algorithm}
\usepackage{float}
\usepackage{overpic}
\usepackage{booktabs}
\usepackage{multirow}

\newcommand \commentout[1] {}

\newcommand{\R}{\mathbb{R}}

\newtheorem{theorem}{Theorem}

\newtheorem{proposition}[theorem]{Proposition}
\newtheorem{lemma}[theorem]{Lemma}
\theoremstyle{definition}
\newtheorem{remark}{Remark}

\title{An operator-splitting algorithm for the hypergraph $p$-Laplacian with applications to missing data recovery}

\author{
Kehan Shi\thanks{Department of Mathematics, China Jiliang University, Hangzhou 310018 China (kshi@cjlu.edu.cn).}
\and
Jin Liu\thanks{Helmholtz Imaging, Deutsches Elektronen-Synchroton DESY, Notkestr. 85,
22607 Hamburg, Germany (jin.liu1@desy.de).}
\and
 Martin Burger$^\dagger$\thanks{Department of Mathematics, University of Hamburg, Bundesstr. 55, 20146
Hamburg, Germany (martin.burger@desy.de).}
}
\date{}

\graphicspath{{figures/}}

\begin{document}

\maketitle

\begin{abstract}
Hypergraph $p$-Laplacian regularization is a fundamental model in data analysis with successful applications in various tasks. It aims to minimize a nonsmooth and typically large-scale objective function defined as the sum of the $p$-th powers of the Lipschitz regularization over hyperedges.
In this paper, we propose an operator-splitting algorithm for the hypergraph $p$-Laplacian that allows us to handle hyperedges separately in a Gauss-Seidel fashion. Each subproblem can be viewed as a generalized graph Lipschitz learning on a hyperedge, for which we introduce an auxiliary variable to overcome the nonsmoothness and solve it with one step of the alternating direction method of multipliers (ADMM).
The resulting algorithm performs proximal ADMM updates sequentially over the hyperedges, and its convergence is proven.
We test the algorithm on missing data recovery problems, including image sparse inpainting and semi-supervised learning, to demonstrate that it is faster than existing methods.
\end{abstract}

\medskip

\noindent{\bf Keywords:} Hypergraph $p$-Laplacian, operator-splitting, ADMM,
missing data recovery.

\medskip

\noindent{\bf Mathematics Subject Classification:} 65K10, 35R02, 65D05.

\section{Introduction}
Let $H=(V, E_H)$ be an undirected hypergraph with vertex set $V=\{x_i\}_{i=1}^n$ and hyperedge set $E_H=\{e_k\}_{k=1}^m$.
Each hyperedge $e_k$ is a subset of $V$ with cardinality $|e_k|\geq 2$.
For any vertices $x_i, x_j\in V$, we denote by $w_{i,j}\geq 0$ the weight between $x_i$ and $x_j$.
In this paper, we are interested in the following optimization problem
\begin{equation}\label{eq:1.1}
  \min_{u\in\mathbb{R}^n}
   J(u)=J_H(u)+ F(u),
\end{equation}
where
\begin{equation}\label{eq:1.1a}
J_H(u)=\sum_{k=1}^mJ_{e_k}(u)=\sum_{k=1}^m \max_{x_i,x_j\in e_k}w_{i,j}|u(x_i)-u(x_j)|^p, \quad p\geq 1,
\end{equation}
denotes the regularization of $u$ on the hypergraph $H$ and $F(u)$ is a data fidelity term.

The regularization $J_H$ covers three classical graph and hypergraph cases.
If the hypergraph $H$ degenerates to a graph $G$, i.e., the cardinality $|e_k|=2$ for all $1\leq k\leq m$, then $J_H$ becomes the graph $p$-Laplacian \cite{elmoataz2015p,flores2022analysis}
\begin{equation}\label{eq:GL}
  J_G(u)=\sum_{i,j=1}^n w_{i,j}|u(x_i)-u(x_j)|^p,
\end{equation}
where we adopt the convention that $w_{i,j}=0$ if there is no edge between $x_i$ and $x_j$.
When the hypergraph $H$ contains only one hyperedge, i.e., $m = 1$, replacing $w_{i,j}$ with $w_{i,j}^p$ in \eqref{eq:1.1a} makes the exponent $p$ irrelevant, and $J_H$ simplifies to the graph Lipschitz regularization \cite{kyng2015algorithms,roith2023continuum}
\begin{equation}\label{eq:1.2}
  J_L(u)=\max_{x_i,x_j\in V}w_{i,j}|u(x_i)-u(x_j)|.
\end{equation}
Finally, if the weights are identical on each hyperedge, i.e., $w_{i,j}=w_k$ for any $x_i,x_j\in e_k$, then $J_H$ reduces to the standard hypergraph $p$-Laplacian \cite{hein2013total}
\begin{equation*}
  \bar{J}_H(u)=\sum_{k=1}^m w_k\max_{x_i,x_j\in e_k}|u(x_i)-u(x_j)|^p.
\end{equation*}
In the case $p = 1$, $\bar{J}_H$ is known as the hypergraph total variation.
Making use of pairwise weights $w_{i,j}$ in \eqref{eq:1.1a} is natural and advantageous, particularly in scenarios such as point cloud data processing, where pairwise weights are available \cite{shi2025hypergraph}.

The data fidelity term $F$ in $\eqref{eq:1.1}$ is tailored to the specific task.
In this paper, we consider the application of the hypergraph $p$-Laplacian \eqref{eq:1.1}--\eqref{eq:1.1a} to missing data recovery.
Let
$F(u)=\mathbb{I}_{L}(u)$, where
\begin{align}\label{eq:1.4}
   \mathbb{I}_{L}(u)=
   \begin{cases}
     0,\quad &\mbox{if } u(x_i)=y_i \mbox{ for all } x_i\in L,\\
     \infty,\quad &\mbox{otherwise},
   \end{cases}
 \end{align}
$L$ is a subset of $V$, and $y_i\in\mathbb{R}$.
Problem \eqref{eq:1.1}--\eqref{eq:1.1a} becomes a data interpolation model that recovers the missing values of $u$ on $V\backslash L$. The given data $\{(x_i,y_i), x_i\in L, y_i\in \mathbb{R}\}$ are commonly referred to as the training set. Its applications include image sparse inpainting, semi-supervised learning, and related tasks.
Instead of the hard constraint \eqref{eq:1.4}, one may also consider the soft constraint
\begin{equation*}
  F(u)=\sum_{x_i\in L}|u(x_i)-y_i|^2,
\end{equation*}
which is more robust to noise in the data.

This paper focuses on the numerical algorithm for the optimization problem \eqref{eq:1.1}--\eqref{eq:1.1a}, which is challenging due to the non-differentiability and large-scale nature of $J_H$.
Note that $F_H$ is convex. Problem \eqref{eq:1.1}--\eqref{eq:1.1a} can be solved by the subgradient descent method \cite{shor2012minimization}.
However, it requires a careful selection of the step size \cite{zhang2017re}.
In \cite{hein2013total}, the authors proposed to use the decomposition
\begin{equation*}
  \max_{x_i,x_j\in e_k}|u(x_i)-u(x_j)|= \max_{x_i\in e_k} u(x_i)-\min_{x_i\in e_k} u(x_i)
  =\max K_{e_k} u-\min K_{e_k} u,
\end{equation*}
where $K_{e_k} \in \mathbb{R}^{|e_k| \times n}$ has rows corresponding to the standard unit vectors for $x_i \in e_k$.
Then $\max K_{e_k} u$ and $\min K_{e_k} u$ can be handled separately and efficiently by the primal-dual hybrid gradient (PDHG) algorithm \cite{chambolle2011first}.
This strategy works for the objective function $\bar{J}_H$ with $p = 1$ and $p = 2$, but does not extend to general $p$ or to the objective function $J_H$.

For large-scale optimization problems, stochastic optimization methods have attracted considerable attention.
In \cite{shi2025hypergraph}, the authors applied the stochastic PDHG algorithm \cite{chambolle2018stochastic} to problem \eqref{eq:1.1}--\eqref{eq:1.1a} by rewriting $J_H$ as
 \begin{equation*}
   J_H(u)
   =\sum_{k=1}^m\max_{x_i,x_j\in e_k}\left|w_{i,j}^{\frac{1}{p}}(u(x_i)-u(x_j))\right|^p
   =\sum_{k=1}^m g(D_k u),
 \end{equation*}
where
\begin{equation*}
  g(\beta)=\|\beta\|_\infty^p,\quad
  N=\max\left\{{|e_k|(|e_k|-1)}/{2}: k=1,\cdots,m\right\}, \quad
  D_k\in\mathbb{R}^{N\times n},
\end{equation*}
such that
\[
D_ku=w_{i,j}^{\frac{1}{p}}(u(x_i)-u(x_j)).
\]
The term “stochastic” refers to the fact that, instead of updating all $m$ dual variables simultaneously, only one dual variable is selected randomly for update at each iteration.
This approach works well when $|e_k|$ and $m$ are of moderate size.
If $|e_k|$ is large, the dimension of $D_k$ grows substantially, resulting in considerable memory consumption and computational cost.
Since each update of a dual variable requires a corresponding update of the primal variable $u \in \mathbb{R}^n$, a large $m$ causes frequent updates of $u$, which can be computationally expensive for large $n$.

In this paper, we propose a new algorithm based on the operator-splitting method \cite{glowinski2016some}, also known as the incremental proximal point method \cite{bertsekas2011incremental}, for problem \eqref{eq:1.1}--\eqref{eq:1.1a}.
It allows us to handle each $J_{e_k}(u)$ for $k = 1, \dots, m$ separately.
The numerical algorithm reads
\begin{equation}\label{eq:1.3}
  u^{k+1}=\arg\min_{u\in\mathbb{R}^n}\max_{x_i,x_j\in e_{i_k}}w_{i,j}|u(x_i)-u(x_j)|^p
  +\frac{1}{m}F(u)
  +\frac{1}{2\tau}\|u-u^k\|^2,
\end{equation}
where $i_k=\mbox{mod}(k,m)+1$ and $\tau>0$ is the step size.
As the number of iterations $k$ increases, the algorithm processes all hyperedges in cyclic order.
If $F(u)=\mathbb{I}_{L}(u)$, $u^{k+1}(x_i)=u^{k}(x_i)$ for $x_i\in V\backslash(e_{i_k}\cup L)$ and $u^{k+1}(x_i)=y_i$ for $x_i\in L$.
Subproblem \eqref{eq:1.3} aims to solve $u$ on $e_{i_k}$, denoted by $u|_{e_{i_k}}$, which is a low-dimensional optimization problem in $\mathbb{R}^{|e_{i_k}|}$.
Given that $|e_{i_k}|\ll n$ in most cases, this approach avoids updating the entire $u\in\mathbb{R}^n$.
The operator-splitting method is in the spirit of the Gauss–Seidel method, where the obtained $u|_{e_{i_k}}$
is used for solving $u|_{e_{i_{k+1}}}$.

Subproblem \eqref{eq:1.3} can be viewed as a generalized Lipschitz learning problem on the hyperedge $e_k$ with the data fidelity term $F$ and the soft constraint $|u - u^k|^2$.
The standard Lipschitz learning problem with a hard constraint,
 \begin{equation}\label{eq:1.5}
  \min_{u\in\mathbb{R}^n}\max_{x_i,x_j\in V} w_{i,j}|u(x_i)-u(x_j)|
  +\mathbb{I}_{L}(u)
\end{equation}
admits non-unique minimizers. One usually seeks the unique lex-minimizer that achieves a lexicographically minimal gradient.
This is equivalent to solving the graph $\infty$-Laplacian equation \cite{kyng2015algorithms,calder2019consistency}
 \begin{equation*}
 \begin{cases}
   \max_{x_j\in V}w_{i,j}(u(x_j)-u(x_i))+\min_{x_j\in V}w_{i,j}(u(x_j)-u(x_i))=0, & \mbox{ if } x_i\in V\backslash L, \\
   u(x_i)=y_i, & \mbox{ if } x_i\in L.
 \end{cases}
 \end{equation*}
This approach is no longer valid for subproblem \eqref{eq:1.3} due to the soft constraint and the $p$-th power with $p > 1$.

Instead, we introduce an auxiliary variable $d_{i_k}$ for \eqref{eq:1.3} and rewrite it as
\begin{equation*}
  u^{k+1}=\arg\min_{u\in\mathbb{R}^n}\|d_{i_k}\|_\infty ^p
  +\mathbb{I}_{L}(u)
  +\frac{1}{2\tau}\|u-u^k\|^2,\quad \mbox{s.t. } d_{i_k}=D_{i_k} u.
\end{equation*}
This problem can be solved by the standard alternating direction method of multipliers (ADMM) \cite{boyd2011distributed}.
The algorithm is of independent interest for Lipschitz learning \eqref{eq:1.5}, as it provides an alternative minimizer for the non-unique problem.
One may also employ PDHG \cite{chambolle2011first} for this problem.
However, the ADMM approach avoids the explicit use of the potentially large matrix $D_k$.
Moreover, the ADMM minimizer has a more intuitive interpretation, as its first iteration (with a zero initial guess) coincides with Laplace learning (see Remark \ref{remark:1}).

Combining the operator-splitting method with the ADMM solver for subproblem \eqref{eq:1.3} leads to a new algorithm for problem \eqref{eq:1.1}--\eqref{eq:1.1a}.
For computational efficiency, we restrict ourselves to a single ADMM iteration with an appropriate initialization for each solve of \eqref{eq:1.3}.
The proposed algorithm can be interpreted both as an inexact incremental proximal point method \cite{bertsekas2011incremental} and as a variant of incremental ADMM \cite{zhang2011unified}.
The particular structure of $J_H$ and the convexity of $F$ allow us to prove the convergence of the algorithm within the ADMM framework \cite{zhang2011unified}.
Notably, the convergence result does not rely on any assumption on the step size $\tau$, in contrast to the incremental proximal point method \cite{bertsekas2011incremental}.

This paper is organized as follows. In Section 2, we present the details of the algorithm.
The convergence of the algorithm is proven in Section 3.
Section 4 is devoted to numerical experiments, where we compare the proposed algorithm with related methods for missing data recovery tasks, including image sparse inpainting and semi-supervised learning.
It indicates that the proposed algorithm is several times faster than existing algorithms for large-scale datasets.
We conclude the paper in Section 5.

\section{Proposed algorithm}
Let $\langle \cdot, \cdot\rangle$, $\|\cdot\|$, and $\|\cdot\|_\infty$ be the inner product, the $\ell^2$ norm, and the $\ell^\infty$ norm in the Euclidean space, respectively.
For notational convenience, a vector $d\in\mathbb{R}^{n_1 n_2}$ is sometimes written in a matrix form $d\in \mathbb{R}^{n_1\times n_2}$. But keep in mind that the norm involving $d$ refers specifically to the vector norm in $\mathbb{R}^{n_1 n_2}$.

For any convex function $J:\mathbb{R}^n\rightarrow \mathbb{R}$, the subdifferential of $J$ at $u\in\mathbb{R}^n$ is denoted by $\partial J(u)$, i.e.,
\[
\partial J(u)= \left\{\, q \in \mathbb{R}^n \;\middle|\;
J(v) \geq J(u) + \langle q,\, v-u \rangle \;\; \text{for all } v \in \mathbb{R}^n \right\}.
\]
Throughout this paper, we always assume that the data fidelity term $F$ in \eqref{eq:1.1} is proper and convex.
Specifically, we shall consider $F(u)=\mathbb{I}_L(u)$ in numerical experiments.

\subsection{Operator-splitting algorithm}
Let us begin with the optimality condition for \eqref{eq:1.1}--\eqref{eq:1.1a}, which is
 \begin{equation*}
   \sum_{k=1}^m\partial J_{e_k}(u)+\partial F(u)\ni 0.
 \end{equation*}
 We associate with it the following initial value problem
 \begin{equation}\label{eq:3.4}
   \begin{cases}
   \frac{\partial u}{\partial t}+\sum_{k=1}^m\partial J_{e_k}(u)+\partial F(u)\ni 0,\quad t\in(0,\infty),\\
   u(0)=u^{0},
   \end{cases}
 \end{equation}
 where $u^{0}$ is an initial guess of the solution.
 Equation \eqref{eq:3.4} is now in a standard form that can be solved using the operator-splitting method.

 We adopt Lie's scheme \cite{glowinski2016some} for the time-discretization of \eqref{eq:3.4}.
 Let $\tau$ be the step size, $t^i=i\tau$, and $u^i$ be an approximation of $u(t^i)$.
 For any $i\geq 0$, we solve each subproblem in a cyclic order
 \begin{equation}\label{eq:3.5}
   \begin{cases}
   \frac{\partial u}{\partial t}+\partial J_{e_k}(u)+\frac{1}{m}\partial F(u)\ni 0,\quad t\in(t^i,t^{i+1}),\\
   u(t^i)=u^{i+(k-1)/m},
   \end{cases}
 \end{equation}
 and update
 \begin{equation*}
   u^{i+k/m}=u(t^{i+1}),
 \end{equation*}
 for $k = 1,2,\cdots, m$.

Subproblem \eqref{eq:3.5} has no closed-form solution, even in the case $F\equiv 0$.
 We further discretize it using one step of the backward Euler scheme, also known as the Marchuk-Yanenko scheme. Equation \eqref{eq:3.5} is then approximated by
 \begin{equation*}
   \frac{u-u^{i+(k-1)/m}}{\tau}+\partial J_{e_k}(u)
   +\frac{1}{m}\partial F(u)\ni 0.
 \end{equation*}
It corresponds to the optimality condition of the following minimization problem
\begin{equation*}
  \min_{u\in\mathbb{R}^n} J_{e_k}(u) + \frac{1}{m} F(u) + \frac{1}{2\tau}\left\|u- u^{i+(k-1)/m}\right\|^2.
\end{equation*}
Substituting it into \eqref{eq:3.5} yields the operator-splitting scheme
\begin{equation}\label{eq:3.7}
  u^{k+1}=\arg\min_{u\in\mathbb{R}^n}J_{e_{i_k}}(u)  +\frac{1}{m} F(u)+ \frac{1}{2\tau}\left\|u- u^{k}\right\|^2
\end{equation}
for problem \eqref{eq:1.1}--\eqref{eq:1.1a}, where $i_k=\mbox{mod}(k,m)+1$.

Scheme \eqref{eq:3.7} can also be motivated by the incremental (more precisely, cyclic) proximal point method for large-scale optimization problems \cite{bertsekas2011incremental}.
In the previous discussion, we considered the operator-splitting scheme as an approximation of the evolution equation \eqref{eq:3.4}.
This provides an intuitive interpretation of the initial guess $u^0$, which can be useful in applications.
Let $F(u) \equiv 0$.
The evolution of \eqref{eq:3.4} can be interpreted as performing hypergraph regularization on the initial guess $u^0$.
If $u^0$ represents an image, the iteration \eqref{eq:3.7} generates a sequence of progressively regularized images ${u^k}$ that become smoother as $k$ increases.
This procedure preserves the mean intensity of the image, in the same way as many PDE-based models for image processing \cite{aubert2006mathematical}.

\begin{proposition}
  Let $F(u)\equiv 0$ and the sequence $\{u^k\}$ be generated by \eqref{eq:3.7}. Then for any $k\geq 0$,
  \[
  \frac{1}{n}\sum_{i=1}^{n}u^k(x_i)=\frac{1}{n}\sum_{i=1}^{n}u^0(x_i).
  \]
\end{proposition}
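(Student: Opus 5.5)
The plan is to prove the statement by induction on $k$. It then suffices to show that one step of \eqref{eq:3.7} with $F\equiv 0$ preserves the sum $\sum_{i=1}^n u(x_i)$, that is, $\langle \mathbf{1}, u^{k+1}\rangle=\langle \mathbf{1},u^k\rangle$, where $\mathbf{1}\in\mathbb{R}^n$ is the all-ones vector. I would use the optimality condition of the strongly convex subproblem, whose minimizer exists and is unique:
\[
\frac{u^{k}-u^{k+1}}{\tau}\in \partial J_{e_{i_k}}(u^{k+1}).
\]
The claim then follows once we know that every subgradient $q\in\partial J_{e}(u)$ satisfies $\langle q,\mathbf{1}\rangle=0$. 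Indeed, taking $q=(u^k-u^{k+1})/\tau$ then gives $\langle \mathbf{1},u^{k+1}\rangle=\langle \mathbf{1},u^k\rangle$.

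The key step is this orthogonality of subgradients to constants. It follows from the translation invariance $J_e(u+c\mathbf{1})=J_e(u)$ for all $c\in\mathbb{R}$, which is immediate from \eqref{eq:1.1a} because $J_e$ depends only on the differences $u(x_i)-u(x_j)$. Plugging $v=u+c\mathbf{1}$ into the subgradient inequality gives
\[
0=J_e(u+c\mathbf{1})-J_e(u)\geq c\langle q,\mathbf{1}\rangle
\]
for all $c\in\mathbb{R}$. Taking $c=\pm1$ forces $\langle q,\mathbf{1}\rangle=0$. Note that $J_e$ is convex and finite (a maximum of convex functions $w_{i,j}|u(x_i)-u(x_j)|^p$ with $p\ge1$), so the optimality condition above is valid.

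An alternative route avoids subdifferentials altogether. Let $\bar u=u^{k+1}+c\mathbf{1}$ with $c$ chosen to minimize $\|u^{k+1}+c\mathbf{1}-u^k\|^2$. Then $\bar u$ has the same value of $J_e$ and a proximal term no larger than that of $u^{k+1}$, so by uniqueness of the minimizer $c=0$, and this is exactly the mean condition. There is no real obstacle here. The only care needed is in justifying existence and uniqueness of the minimizer, which follows from strong convexity of the objective in \eqref{eq:3.7}.
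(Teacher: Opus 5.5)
Your proof is correct and matches the paper's own argument: the paper likewise plugs $v=u\pm\mathbf{1}$ into the subgradient inequality to obtain $\langle q,\mathbf{1}\rangle=0$. It then applies this to the optimality condition $\frac{1}{\tau}(u^{k+1}-u^k)+\partial J_{e_{i_k}}(u^{k+1})\ni 0$. Your alternative route, which uses uniqueness of the minimizer and the shift $u^{k+1}+c\mathbf{1}$, is also valid but not needed.
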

\begin{proof}
  Let $u\in\mathbb{R}^n$ and  $q\in \partial J_{e_{i_k}}(u)$. It follows that
  \[
  J_{e_{i_k}}(v)-J_{e_{i_k}}(u)\geq \langle q, v-u\rangle, \quad \mbox{for any } v\in\mathbb{R}^n.
  \]
  By taking $v=u+1$ and $v=u-1$ in the above respectively, we have
  \[
  \langle q, 1\rangle =0.
  \]
  Namely, the subdifferential $J_{e_{i_k}}$ has the zero mean value at any $u\in\mathbb{R}^n$.

  Notice from \eqref{eq:3.7} that
  \[
  \frac{1}{\tau} (u^{k+1}-u^k) +\partial J_{e_{i_k}}(u^{k+1})\ni 0.
  \]
  By using the zero mean value of $\partial J_{e_{i_k}}(u^{k+1})$, we obtain the conclusion.
\end{proof}

Due to the non-differentiable maximum operator in $J_{e_{i_k}}$, each subproblem in \eqref{eq:3.7} must be solved using an additional numerical method.
We propose to handle it with the alternating direction method of multipliers (ADMM) \cite{boyd2011distributed}.
To this end, we introduce an auxiliary variable $d_{i_k} \in \mathbb{R}^{n \times n}$ and adopt the following notation
  \begin{align}\label{eq:Du}
    (D_{i_k}u)(x_i,x_j)=
    \begin{cases}
      w^{\frac{1}{p}}_{i,j}(u(x_i)-u(x_j)), & \mbox{if } x_i,x_j\in e_{i_k}, \\
      0, & \mbox{otherwise}.
    \end{cases}
  \end{align}
Then \eqref{eq:3.7} is equivalent to the constrained problem
\begin{equation*}
  \min_{u\in\mathbb{R}^n,d\in\mathbb{R}^{n\times n}} \|d_{i_k}\|^p_\infty  +\frac{1}{m} F(u)+ \frac{1}{2\tau}\left\|u- u^{k}\right\|^2
   \quad \mbox{ s.t. }  D_{i_k}u=d_{i_k}.
\end{equation*}
The associated augmented Lagrangian reads
\begin{equation*}
  \mathcal{L}_\rho(u,d_{i_k};\lambda_{i_k})
  =
  \|d_{i_k}\|^p_\infty+\frac{1}{m} F(u)+ \frac{1}{2\tau}\left\|u- u^{k}\right\|^2
  +\langle \lambda_{i_k}, D_{i_k}u-d_{i_k}\rangle+\frac{\rho}{2}\|D_{i_k}u-d_{i_k}\|^2,
\end{equation*}
where $\lambda_{i_k}\in\mathbb{R}^{n\times n}$ is the Lagrangian multiplier and $\rho>0$ is the penalty parameter.
Instead of jointly solving $u$ and $d_{i_k}$ in $\mathcal{L}_\rho$, ADMM alternatively updates $u$ and $d_{i_k}$.
Namely, for $j=0,1,\cdots$, and initializations $u^{k,0}\in\mathbb{R}^n$, $d_{i_k}^{0}, \lambda_{i_k}^{0}\in\mathbb{R}^{n\times n}$,
\begin{align}\label{eq:admm}
  \begin{cases}
    u^{k,j+1}=\arg\min_{u\in\mathbb{R}^n}
  \frac{1}{m} F(u)+\frac{1}{2\tau} \|u-u^k\|^2
  +\langle \lambda_{i_k}^{j}, D_{i_k}u-d_{i_k}^{j}\rangle+\frac{\rho}{2}\|D_{i_k}u-d_{i_k}^{j}\|^2, \\
   d_{i_k}^{j+1}= \arg\min_{d\in\mathbb{R}^{n\times n}} \|d\|^p_\infty
  +\langle \lambda_{i_k}^{j}, D_{i_k}u^{k,j+1}-d\rangle+\frac{\rho}{2}\|D_{i_k}u^{k,j+1}-d\|^2, \\
   \lambda_{i_k}^{j+1}= \lambda_{i_k}^{j}+\rho(D_{i_k}u^{k,j+1}-d_{i_k}^{j+1}).
  \end{cases}
\end{align}
The convergence of the algorithm is classical.
We refer the readers to \cite{zhang2011unified} for the proof.

Applying \eqref{eq:admm} to \eqref{eq:3.7} yields the final scheme for the hypergraph $p$-Laplacian \eqref{eq:1.1}--\eqref{eq:1.1a}.
For computational efficiency, we do not require an exact solution of subproblem \eqref{eq:3.7}. In fact, for each subproblem, only one iteration of the ADMM scheme \eqref{eq:admm} is performed, i.e., $j = 0$ in \eqref{eq:admm}. Moreover, a warm-start strategy is employed for the initialization of ADMM: the variables $(u, d_1, \dots, d_m, \lambda_1, \dots, \lambda_m)$ obtained from solving for $u^{k}$ are used as the initial values for computing $u^{k+1}$.
The details of the algorithm are summarized in Algorithm \ref{alg1}.

\begin{algorithm}[H]
  \caption{Operator-splitting algorithm for the hypergraph $p$-Laplacian \eqref{eq:1.1}--\eqref{eq:1.1a}.}
  \label{alg1}
  \begin{algorithmic}
  \REQUIRE Weight $W=(w_{i,j})\in\mathbb{R}^{n\times n}$, data fidelity $F$,
  parameters $p\geq 1$, $m\geq 1$, $\tau>0$, $\rho>0$.
  \STATE Initialization: $u^{0}\in\mathbb{R}^n$, $d_i^0=\lambda_i^0=0\in \mathbb{R}^{n\times n}$
  for $i=1,\cdots,m$, $k=0$.
  \WHILE { the stopping criterion is not satisfied}
  \STATE Let $i_k=\mbox{mod}(k,m)+1$.
  \STATE Update $u$:
  \begin{equation}\label{eq:3.8}
    u^{k+1}=\arg\min_{u\in\mathbb{R}^n}
  \frac{1}{m} F(u)+\frac{1}{2\tau} \|u-u^k\|^2+\langle \lambda_{i_k}^{k}, D_{i_k}u-d_{i_k}^{k}\rangle+\frac{\rho}{2}\|D_{i_k}u-d_{i_k}^{k}\|^2.
  \end{equation}
  \STATE Update $d$:
  \begin{align}\label{eq:3.9}
    d^{k+1}_l=
    \begin{cases}
    \arg\min\limits_{d\in\mathbb{R}^{n\times n}} \|d\|^p_\infty
  +\langle \lambda_{i_k}^{k}, D_{i_k}u^{k+1}-d\rangle+\frac{\rho}{2}\|D_{i_k}u^{k+1}-d\|^2,
    ~ &\mbox{if } l= i_k, \\
    d^{k}_l, ~ &\mbox{if } l\neq i_k.
  \end{cases}
  \end{align}
  \STATE Update $\lambda$:
  \begin{align}\label{eq:3.10}
  \lambda^{k+1}_l=
  \begin{cases}
    \lambda_{i_k}^{k}+\rho(D_{i_k}u^{k+1}-d_{i_k}^{k+1}),
    ~ &\mbox{if } l= i_k, \\
    \lambda^{k}_l, ~ &\mbox{if } l\neq i_k.
  \end{cases}
  \end{align}
  \STATE Update $k=k+1$.
  \ENDWHILE
  \RETURN $u^{k}$.
  \end{algorithmic}
  \end{algorithm}

For notational simplicity, we assume that $d_l^k, \lambda_l^k\in\mathbb{R}^{n\times n}$ in Algorithm \ref{alg1}. In fact, it suffices to assume that $d_l^k, \lambda_l^k\in\mathbb{R}^{|e_l|\times |e_l|}$, since
\begin{equation}\label{eq:alg1}
  d_l^k(x_i,x_j)=\lambda_l^k(x_i,x_j)=0,
\end{equation}
whenever either $x_i$ or $x_j$ belongs to $V\backslash e_l$.

If $m = 1$, Algorithm \ref{alg1} coincides with the classical ADMM with an additional proximal point term in the $u$-subproblem, also known as the proximal point algorithm applied to the augmented Lagrangian formulation \cite{rockafellar1976augmented,zhang2011unified}.
When $F$ is not strictly convex, the proximal point term compensates for the lack of strict convexity of the objective function and results in a more stable algorithm.
By this observation, Algorithm \ref{alg1} can be regarded as a proximal ADMM with incremental updates
for the constrained optimization problem
\begin{equation}\label{eq:3.7a}
  \sum_{k=1}^{m}\|d_k\|_\infty^p + F(u) \quad \mbox{s.t. } d_k=D_ku,
\end{equation}
The associated Lagrangian reads
\begin{equation}\label{eq:Lagrangian}
  \mathcal{L}(u,d_1\cdots d_m;\lambda_1\cdots \lambda_m)
  =F(u) + \sum_{k=1}^{m}\|d_k\|_\infty^p + \langle \lambda_k, D_ku-d_k \rangle,
\end{equation}
where $\lambda_i$, $i=1,\cdots, m$, are the Lagrangian multipliers.

\begin{remark}\label{remark:1}
We use zero initialization for both $d_i$ and $\lambda_i$ in Algorithm \ref{alg1}.
During the first cycle of iterations, i.e., $0\leq k\leq m-1$,
it follows from \eqref{eq:3.8} that
  \[
  u^{k+1}=\arg\min_{u\in\mathbb{R}^n}
  \frac{1}{m} F(u)
  +\frac{1}{2\tau} \|u-u^k\|^2+\frac{\rho}{2}\|D_{k+1}u\|^2,
  \]
which is exactly the Laplace learning on hyperedge $e_{k+1}$ with two additional constraints \cite{zhu2003semi}.
This observation provides an intuitive interpretation for the minimizer deduced by Algorithm \ref{alg1}, which can be viewed as a correction to classical Laplace learning.
\end{remark}

\begin{remark}\label{re:2}
  We assume that $F$ is split uniformly into $m$ parts in the operator-splitting step of \eqref{eq:3.5} In fact, any partition is admissible.
  A particular choice is the partition $(F, 0, \dots, 0)$ associated with the hyperedges $(e_1, e_2, \dots, e_m)$.
  This choice is beneficial when subproblem \eqref{eq:3.8} is difficult to solve in the presence of $F$, but becomes significantly easier when $F$ is omitted.
\end{remark}

\subsection{Solutions to subproblems \eqref{eq:3.8} and \eqref{eq:3.9}}

The $u$-subproblem \eqref{eq:3.8} depends on the data fidelity term $F$.
We consider $F(u)=\mathbb{I}_L(u)$.
According to \eqref{eq:alg1}, subproblem \eqref{eq:3.8} can be divided into two parts. The solution on $V\backslash e_{i_k}$ is trivial.
Namely,
\begin{align*}
  u^{k+1}(x_i)=
    u^{k}(x_i), \quad \mbox{if } x_i\in V\backslash e_{i_k}.
\end{align*}
For the solution on $e_{i_k}$,
let $v^{k}=d_{i_k}^{k}-\frac{\lambda_{i_k}^{k}}{\rho}$. Then
\begin{align*}
  u^{k+1}|_{e_{i_k}}
  &=\arg\min_{u\in\mathbb{R}^{|e_{i_k}|}}\frac{\rho}{2}
  \left\|D_{i_k}u-v^{k}\right\|^2
  +\frac{1}{2\tau} \|u-u^k\|^2+\frac{1}{m}\mathbb{I}_L(u)\\
  &=\arg\min_{u\in\mathbb{R}^{|e_{i_k}|}}
  \sum_{x_i,x_j\in e_{i_k}}\left|w^{\frac{1}{p}}_{i,j}(u(x_i)-u(x_j))-v^{k}_{i,j}\right|^2
  +\frac{1}{\rho\tau} \|u-u^k\|^2+\mathbb{I}_L(u).
\end{align*}
By the optimality condition,
$u^{k+1}|_{e_{i_k}}$ satisfies the linear system
\begin{equation*}
  \begin{cases}
    \sum\limits_{x_j\in e_{i_k}}\left(w_{i,j}^{\frac{2}{p}}+w_{j,i}^{\frac{2}{p}}\right)
    (u(x_i)-u(x_j))
    +w^{\frac{1}{p}}_{j,i}v^{k}_{j,i}-w^{\frac{1}{p}}_{i,j}v^{k}_{i,j}
    +\frac{1}{\rho\tau}(u(x_i)-u^k(x_i))=0,  &\mbox{if } x_i\in e_{i_k}\backslash L, \\
    u(x_i)=y_i, &\mbox{if } x_i\in e_{i_k}\cap L.
  \end{cases}
\end{equation*}
The coefficient matrix of the system is symmetric and positive definite.
If $w_{i,j} = 1$ on the hyperedge $e_{i_k}$, the system admits a closed-form solution.
More precisely,
\[
u(x_i)=\frac{1}{2|e_{i_k}|+\frac{1}{\rho\tau}}\left(h(x_i)
+\frac{1}{\frac{1}{2\rho\tau}+|e_{i_k}\cap L|}
\sum_{x_i\in e_{i_k}\backslash L}h(x_i)\right), \quad x_i\in e_{i_k}\backslash L,
\]
where
\[
h(x_i)=2\sum_{x_j\in e_{i_k}\cap L}u(x_j)
+\sum_{x_j\in e_{i_k}}({v^{k}_{i,j}-v^{k}_{j,i}})
+\frac{1}{\rho\tau}u^k(x_i).
\]

For the $d$-subproblem \eqref{eq:3.9},
let us recall the definition of the proximal operator
  \begin{equation*}
  \mbox{Prox}_{f}(\alpha)=\arg\min_{\beta} f(\beta)+ \frac{1}{2}\|\beta-\alpha\|^2.
\end{equation*}
The it can be rewritten as
\begin{align*}
  d_{i_k}^{k+1}
  =\mbox{Prox}_{\frac{1}{\rho}\|\cdot\|^p_\infty}
  \left(D_{i_k}u^{k+1}+\frac{\lambda_{i_k}^{k}}{\rho}\right)
  =:\mbox{Prox}_{g}
  \left(\beta\right).
\end{align*}
It follows from the well-known Moreau decomposition
$\beta=\mbox{Prox}_{g}\left(\beta\right)+\mbox{Prox}_{g^*} \left(\beta\right)$
that
\begin{equation}\label{eq:2.3a}
  d_{i_k}^{k+1}=\mbox{Prox}_{g}\left(\beta\right)
  =\beta-\mbox{Prox}_{g^*}\left(\beta\right),
\end{equation}
where
\begin{equation*}
  g^*(\beta)=\max_{\alpha}\langle \alpha,\beta\rangle -g(\alpha)
\end{equation*}
denotes the Fenchel conjugate of $g$.
We are left to compute $g^*$ and $\mbox{Prox}_{g^*}\left(\beta\right)$.

If $p=1$, by the definition of the Fenchel conjugate, we have
\begin{equation*}
  g^*(\gamma)=
  \begin{cases}
    0, & \mbox{if } \|\gamma\|_1\leq \frac{1}{\rho}, \\
    +\infty, & \mbox{otherwise}.
  \end{cases}
\end{equation*}
Then
\[
  \mbox{Prox}_{g^*}\left(\beta\right)
  =\arg\min_{\gamma}
  g^*(\gamma)
  +\frac{1}{2}\|\gamma-\beta\|^2
\]
is just the projection onto the $\ell^1$ ball, denoted by $\mbox{Proj}_{\|\cdot\|_1\leq \frac{1}{\rho}}(\beta)$.
Consequently,
\begin{align*}
  d_{i_k}^{k+1}
  =\beta-\mbox{Prox}_{g^*}
  \left(\beta\right)
  =\left(D_{i_k}u^{k+1}+\frac{\lambda_{i_k}^{k}}{\rho}\right)
  -\mbox{Proj}_{\|\cdot\|_1\leq \frac{1}{\rho}}\left(D_{i_k}u^{k+1}+\frac{\lambda_{i_k}^{k}}{\rho}\right).
\end{align*}
Here $\mbox{Proj}_{\|\cdot\|_1\leq \frac{1}{\rho}}$ can be solved efficiently \cite{condat2016fast}.

For the case $p>1$, we have the following Lemma \cite[Lemma 5.1]{shi2025hypergraph}.
\begin{lemma}
    Let $p>1$ and $\rho>0$. Then
    \begin{equation*}
      g^*(\gamma)
      =\left(\frac{\rho}{p}\right)^{p'-1}\frac{1}{p'}\|\gamma\|_1^{p'},
    \end{equation*}
    where $p'=\frac{p}{p-1}$.
\end{lemma}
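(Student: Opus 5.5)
The plan is to compute the Fenchel conjugate of $g(\alpha)=\frac{1}{\rho}\|\alpha\|_\infty^p$ directly from the definition $g^*(\gamma)=\sup_{\alpha}\langle\alpha,\gamma\rangle-g(\alpha)$. The argument has two stages. First, use the duality between $\|\cdot\|_\infty$ and $\|\cdot\|_1$ to reduce the problem to a one-dimensional supremum. Then solve that scalar problem explicitly.

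\textbf{Reduction.} Fix $\gamma$ and write every $\alpha$ as $\alpha=t\theta$ with $t=\|\alpha\|_\infty\geq 0$ and $\|\theta\|_\infty\le 1$. By H\"older's inequality, $\langle\alpha,\gamma\rangle\le t\|\gamma\|_1$. Equality holds for $\theta=\mathrm{sign}(\gamma)$ componentwise, with an arbitrary sign taken where $\gamma$ vanishes. Since $g(\alpha)$ depends only on $t$, this gives
\[
g^*(\gamma)=\sup_{t\ge 0}\Big(t\,s-\frac{1}{\rho}t^p\Big)=:h^*(s),\qquad s=\|\gamma\|_1 .
\]
The inequality ``$\le$'' follows from H\"older's inequality. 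The inequality ``$\ge$'' follows by testing $\alpha=t\,\mathrm{sign}(\gamma)$.

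\textbf{Scalar problem.} For $p>1$ the function $t\mapsto ts-t^p/\rho$ is strictly concave on $[0,\infty)$ and tends to $-\infty$ as $t\to\infty$. Hence its supremum is attained, and for $s>0$ it is attained at the critical point
\[
p\,t^{p-1}/\rho=s,\qquad\text{i.e.}\qquad t_*=(\rho s/p)^{1/(p-1)}=(\rho s/p)^{p'-1}.
\]
Substituting gives $t_*s-t_*^p/\rho=t_*\big(s-t_*^{p-1}/\rho\big)=t_*s(1-1/p)=t_*s/p'$. Since $t_*s=(\rho/p)^{p'-1}s^{p'}$, we obtain
\[
g^*(\gamma)=\Big(\frac{\rho}{p}\Big)^{p'-1}\frac{1}{p'}\|\gamma\|_1^{p'}.
\]
The case $s=0$ gives $0$ and agrees with this formula.

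The only point needing care is the norm-duality step, namely that the supremum over the direction $\theta$ equals exactly $\|\gamma\|_1$ and is attained. This is standard and presents no real obstacle. The remainder is routine calculus, using the exponent identity $1/(p-1)=p'-1$.
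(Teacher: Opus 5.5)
Your proof is correct and complete. The H\"older/sign-vector step gives exactly $g^*(\gamma)=\sup_{t\ge 0}\bigl(t\|\gamma\|_1-t^p/\rho\bigr)$, and the calculus, using $1/(p-1)=p'-1$ and $1-1/p=1/p'$, produces the stated constant, with the case $\gamma=0$ also covered.

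The paper does not prove this lemma; it only cites Lemma 5.1 of \cite{shi2025hypergraph}, so your computation gives a self-contained proof. It is the hands-on form of the standard conjugate calculus, which gives the same formula:
\begin{itemize}
\item $(\varphi\circ\|\cdot\|_\infty)^*=\varphi^*\circ\|\cdot\|_1$ for even convex $\varphi$;
\item $(\tfrac{1}{\rho}f)^*(\gamma)=\tfrac{1}{\rho}f^*(\rho\gamma)$.
\end{itemize}
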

It follows that
\begin{equation}\label{eq:2.4}
  \mbox{Prox}_{g^*}
  \left(\beta\right)
  =\arg\min_\gamma\left(\frac{\rho}{p}\right)^{p'-1}\frac{1}{p'}\|\gamma\|_1^{p'}
  +\frac{1}{2}\|\gamma-\beta\|^2.
\end{equation}
Since $p'>1$ and the $p'$-th power of the $\ell^1$ norm is non-separable, problem \eqref{eq:2.4} is more subtle.
We adopt the algorithm of \cite{prater2023constructive}.
After obtaining $\mbox{Prox}_{g^*}(\beta)$, we substitute it into \eqref{eq:2.3a} to find the solution of the $d$-subproblem $d_{i_k}^{k+1}$.

\section{Convergence analysis}
The convergence of Algorithm \ref{alg1} can be viewed as a generalization of the proximal ADMM \cite{zhang2011unified} to the large-scale setting.
It relies on the convexity of the data fidelity term $F$.

\begin{lemma}\label{le:convex}
Let $F:\mathbb{R}^n\rightarrow \mathbb{R}$ be convex. For any $u_1,\cdots, u_l\in\mathbb{R}^n$, there exists a $u\in \mathbb{R}^n$, such that
\begin{equation*}
  F(u)=\frac{1}{l}\sum_{i=1}^l F(u_i).
\end{equation*}
\end{lemma}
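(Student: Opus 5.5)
The plan is to use convexity of $F$ together with the intermediate value theorem along a segment. The quantity $c=\frac{1}{l}\sum_{i=1}^l F(u_i)$ is an average, so it lies between $\min_i F(u_i)$ and $\max_i F(u_i)$. Pick indices $a,b\in\{1,\dots,l\}$ with
\[
F(u_a)=\min_i F(u_i)\le c\le \max_i F(u_i)=F(u_b).
\]
Here $F$ is real-valued on $\mathbb{R}^n$, so every value is finite.

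Consider the scalar function $\phi(t)=F((1-t)u_a+tu_b)$ for $t\in[0,1]$. This is a convex, finite function of one variable on an interval. The key fact is that a convex function which is finite on all of $\mathbb{R}^n$ is continuous. One can either quote this standard fact or argue directly: $\phi$ extends to a finite convex function on all of $\mathbb{R}$, and a finite convex function on an open interval is continuous. This gives continuity of $\phi$ on $[0,1]$, including at the endpoints, because $[0,1]$ sits inside the open interval where $\phi$ is finite.

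Since $\phi(0)=F(u_a)\le c\le F(u_b)=\phi(1)$, the intermediate value theorem yields $t^*\in[0,1]$ with $\phi(t^*)=c$. Setting $u=(1-t^*)u_a+t^*u_b$ then gives $F(u)=\frac{1}{l}\sum_{i=1}^l F(u_i)$, as required.

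The only delicate point is the continuity of $\phi$. This is exactly where real-valuedness of $F$ on all of $\mathbb{R}^n$ is used, which matches the hypothesis $F:\mathbb{R}^n\rightarrow\mathbb{R}$ in Lemma \ref{le:convex}. An extended-valued $F$ such as $\mathbb{I}_L$ could fail to be continuous, but for the indicator function the claim is trivial anyway: the average is $0$ or $\infty$ exactly when some $u_i$ attains that value. Convexity enters only through this continuity argument.
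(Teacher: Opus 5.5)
Your proof is correct and takes essentially the same route as the paper: you use continuity of a finite convex function plus an intermediate value (connectedness) argument, with the average $\frac{1}{l}\sum_{i=1}^l F(u_i)$ lying between $\min_i F(u_i)$ and $\max_i F(u_i)$. The only difference is that you restrict to the segment joining a minimizing and a maximizing $u_i$ instead of using the whole convex hull $C$, which is slightly cleaner and avoids the paper's claimed equality $[\min_i F(u_i),\max_i F(u_i)]=F(C)$, which in general holds only as an inclusion.
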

\begin{proof}
Let $C=\operatorname{conv}\{u_1,\cdots, u_l\}$ denote the convex hull of the finite set
$\{u_1,\cdots, u_l\}$. Clearly, $C\subset\mathbb{R}^n$ is compact and connected.
It follows from the convexity of $F$ that $F$ is continuous on $C$.
Consequently, the image set $F(C)=\{F(u): u\in C\}$ is a compact and connected subset
of $\mathbb{R}$ (i.e., a closed interval).

Observe that
\[
\min_{1\le i\le l} F(u_i)\;\le\;\bar F:=\frac{1}{l}\sum_{i=1}^l F(u_i) \;\le\;\max_{1\le i\le l} F(u_i).
\]
In particular $\bar F\in [\min_i F(u_i),\max_i F(u_i)]= F(C)$.
By the definition of the image set, there exists a $u\in C$ with
$F(u)=\bar F$. This proves the lemma.
\end{proof}

The convergence of Algorithm \ref{alg1} is stated as follows.
\begin{theorem}
Assume that $F:\mathbb{R}^n\rightarrow \mathbb{R}$ is convex.
  Let the sequence $(u^k, d_1^k, \cdots, d_m^k, \lambda_1^k,\cdots,\lambda_m^k)$ be generated by Algorithm \ref{alg1}. Then
  \begin{equation}\label{eq:th3:1}
  \mbox{ $(u^k, d_1^k, \cdots, d_m^k, \lambda_1^k,\cdots,\lambda_m^k)$ is bounded; \quad $u^k$, $d_i^k$, and $D_iu^{k}-d_i^k$ are convergent}
  \end{equation}
  for any $i=1,\cdots, m$,
  \begin{equation}\label{eq:th3:2}
    \lim_{k\rightarrow\infty}\sum_{i=1}^{m}\|d_i^{k}\|_\infty^p + F(u^k)
    =\inf_{u\in\mathbb{R}^n}J(u),
  \end{equation}
  and all limit points of $(u^k, d_1^k, \cdots, d_m^k, \lambda_1^k,\cdots,\lambda_m^k)$ are saddle points of $\mathcal{L}(u,d_1\cdots d_m;\lambda_1\cdots \lambda_m)$, defined in \eqref{eq:Lagrangian}.
\end{theorem}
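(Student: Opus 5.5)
Since $F$ is assumed finite and convex on $\mathbb{R}^n$, I would follow the standard Lyapunov (Fejér-monotonicity) argument for proximal ADMM from \cite{zhang2011unified}, adapted to the cyclic, one-block-at-a-time updates.

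\textbf{Step 1: a saddle point and the optimality conditions of one iteration.} First fix a saddle point $(u^*,d_1^*,\dots,d_m^*;\lambda_1^*,\dots,\lambda_m^*)$ of $\mathcal{L}$ in \eqref{eq:Lagrangian}. It exists because the problem \eqref{eq:3.7a} is convex, all terms are finite, and the constraints are linear, so strong duality holds; one also needs a minimizer of $J$ to exist, which I would assume or derive from coercivity modulo constants.

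At iteration $k$ with $i=i_k$, I would write the optimality conditions of \eqref{eq:3.8} and \eqref{eq:3.9}. Using \eqref{eq:3.10}, they read
\[
0\in \tfrac1m\partial F(u^{k+1})+\tfrac1\tau(u^{k+1}-u^k)+D_i^T\big(\lambda_i^{k+1}+\rho(d_i^{k+1}-d_i^k)\big),\qquad \lambda_i^{k+1}\in\partial\|\cdot\|_\infty^p(d_i^{k+1}).
\]
The saddle point conditions are $0\in\partial F(u^*)+\sum_l D_l^T\lambda_l^*$, $\lambda_l^*\in\partial\|\cdot\|_\infty^p(d_l^*)$ and $D_lu^*=d_l^*$.

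\textbf{Step 2: a per-iteration descent inequality.} Pair these conditions with $u^{k+1}-u^*$ and $d_i^{k+1}-d_i^*$ and use monotonicity of subdifferentials. This should give an inequality of the form
\[
\Phi^{k+1}\le \Phi^k-\Big(\tfrac1\tau\|u^{k+1}-u^k\|^2+\rho\|d_i^{k+1}-d_i^k\|^2+\tfrac1\rho\|\lambda_i^{k+1}-\lambda_i^k\|^2\Big)+\mathcal{E}_k,
\]
where
\[
\Phi^k=\tfrac1\tau\|u^k-u^*\|^2+\sum_l\Big(\rho\|d_l^k-d_l^*\|^2+\tfrac1\rho\|\lambda_l^k-\lambda_l^*\|^2\Big).
\]
The inactive blocks $l\ne i$ do not change, so they contribute nothing. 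The error term $\mathcal{E}_k$ collects the mismatch between the incremental condition and the full one: only $\frac1m\partial F$ and $D_i^T\lambda$ appear, instead of $\partial F$ and all $\sum_l D_l^T\lambda_l$. It is a sum of terms like $\frac1m\big(F(u^*)-F(u^{k+1})\big)+\|d_i^*\|_\infty^p-\|d_i^{k+1}\|_\infty^p+\langle\lambda_i^*,D_iu^{k+1}-d_i^{k+1}\rangle$, i.e.\ the $i$-th share of the Lagrangian gap $\mathcal{L}(u^*,d^*;\lambda^*)-\mathcal{L}(\cdot,\cdot;\lambda^*)$.

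\textbf{Step 3 (main obstacle): summing over one cycle.} I would sum the inequality over a full cycle $k=cm,\dots,cm+m-1$. Then each hyperedge block appears exactly once, but the terms $\frac1m F(u^{k+1})$ are evaluated at $m$ different points. Here Lemma~\ref{le:convex} enters: it supplies a single $\bar u^c$ with $\frac1m\sum_{k}F(u^{k+1})=F(\bar u^c)$. The cycle error then becomes $\mathcal{L}(u^*,d^*;\lambda^*)-\mathcal{L}(\bar u^c,d^{(c+1)m};\lambda^*)$ plus cross terms. Those cross terms come from the $D_i$ blocks seeing different $u$'s, and I would bound them by the telescoped increments $\|u^{k+1}-u^k\|$ via Young's inequality. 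The saddle property makes the gap nonpositive. I expect the delicate part to be controlling these cross terms, the $u$-drift within a cycle, with the available decrease terms without any restriction on $\tau$. I would try to absorb them using the identity $u^{k+1}-u^k=0$ off $e_i\cup\{\text{support of }\partial F\}$.

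\textbf{Step 4: conclusions.} Once $\Phi$ is shown to be nonincreasing per cycle, the sequence is bounded. The decrease terms are summable, so $u^{k+1}-u^k\to0$, $d_i^{k+1}-d_i^k\to0$ and $\lambda_i^{k+1}-\lambda_i^k\to0$. Since $\lambda_i^{k+1}-\lambda_i^k=\rho(D_iu^{k+1}-d_i^{k+1})$, the residuals tend to zero. Passing to the limit in the optimality conditions, using closedness of subdifferential graphs, shows that every limit point is a saddle point. Applying the Fejér argument with that limit point in place of $(u^*,d^*,\lambda^*)$ then gives convergence of $u^k$ and $d_i^k$.

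Finally, \eqref{eq:th3:2} follows. Feasibility in the limit and continuity give $\sum_i\|d_i^k\|_\infty^p+F(u^k)\to J(u^\infty)$. The saddle point property gives $J(u^\infty)=\mathcal{L}(u^*,d^*;\lambda^*)=\inf J$.
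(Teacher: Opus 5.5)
Your outline (fix a saddle point, derive a one-step inequality from the optimality conditions of the $u$- and $d$-steps, sum over a cycle using Lemma~\ref{le:convex}, pass to the limit) matches the paper's. You also located the crux correctly. But Step~3 is a genuine gap, and your proposed remedy cannot close it.

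After summing over a cycle, the coupling term $\sum_i\langle\lambda_i^*,D_iu^{cm+i}\rangle$ evaluates each block at a different iterate. Replacing these iterates by one point $\bar u^c$ leaves $\sum_i\langle D_i^T\lambda_i^*,u^{cm+i}-\bar u^c\rangle$. This is \emph{linear} in the within-cycle drift, with a fixed and generally nonzero coefficient, while your decrease terms are quadratic. Young's inequality against $\frac{1}{2\tau}\|u^{k+1}-u^k\|^2$ therefore leaves a residual of order $\tau\|\lambda^*\|^2$ in every cycle. That residual is not summable for fixed $\tau$, so Fej\'er monotonicity does not follow. The support identity does not help either: for a fidelity like $F(u)=\frac12\|u-f\|^2$, the $u$-step moves every coordinate.

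The paper gets past this point by asserting, ``from Lemma~\ref{le:convex}'', the existence of a $u$ with
\[
\frac1m\sum_iF(u^{cm+i})+\sum_i\langle\bar\lambda_i,D_iu^{cm+i}\rangle=F(u)+\sum_i\langle\bar\lambda_i,D_iu\rangle .
\]
The lemma only matches the $F$-average. Moreover, by the saddle property $\bar u$ minimizes the right-hand side over $u$. So such a $u$ exists only if the left-hand side is at least that minimum, which is (a slightly stronger form of) the bound $\sum_iA_i^{cm+i}\le0$ that the step is meant to prove. The paper's proof thus has exactly the hole you flagged.

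In fact the hole cannot be filled, because with fixed $\tau$ the statement is false. Take the triangle $n=m=3$, $e_1=\{x_1,x_2\}$, $e_2=\{x_2,x_3\}$, $e_3=\{x_3,x_1\}$, with $w_{i,j}=1$, $p=2$, $F(u)=\frac12\|u-f\|^2$ and $f=(1,0,0)$.

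\begin{enumerate}
\item Each $x_j$ lies outside exactly one hyperedge. When that hyperedge is active, the $u$-step depends on $u(x_j)$ only through $\frac16|u(x_j)-f_j|^2+\frac{1}{2\tau}|u(x_j)-u^k(x_j)|^2$.
\item Hence $u^{k+1}(x_j)-u^k(x_j)=\frac{\tau}{3+\tau}(f_j-u^k(x_j))$ at those steps. The asserted convergence of $u^k$ gives $u^{k+1}-u^k\to0$, which forces $u^k\to f$.
\item But $\partial_{u(x_1)}J(f)=2(f_1-f_2)+2(f_1-f_3)=4\neq0$, so $f$ is not the minimizer of $J$. This contradicts the claims that limit points are saddle points and that the energy tends to $\inf J$.
\end{enumerate}

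The defect is also visible in the limit system the paper derives, $\frac1mq^\infty+D_i^T\lambda_i^\infty=0$ for \emph{each} $i$ separately. This is much stronger than the saddle condition $q^\infty+\sum_iD_i^T\lambda_i^\infty=0$, and in this example it has no solution at all. A correct convergence result needs a changed hypothesis or scheme, for example diminishing step sizes as in the incremental proximal point literature.
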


\begin{proof}

Let $(\bar{u},\bar{d}_1\cdots \bar{d}_m;\bar{\lambda}_1\cdots \bar{\lambda}_m)$ be a saddle point of the Lagrangian $\mathcal{L}(u,d_1\cdots d_m;\lambda_1\cdots \lambda_m)$.
We have
\begin{equation}\label{eq:3.12}
  D_i\bar{u}-\bar{d}_i=0, \quad  i=1,2,\cdots m.
\end{equation}
Furthermore,
\begin{equation}\label{eq:3.11}
  \mathcal{L}(\bar{u},\bar{d}_1\cdots \bar{d}_m;\bar{\lambda}_1\cdots \bar{\lambda}_m)
  \leq \mathcal{L}({u},{d}_1 \cdots {d}_m;\bar{\lambda}_1\cdots \bar{\lambda}_m),
\end{equation}
for any $u\in\mathbb{R}^n, d_i\in\mathbb{R}^{n^2}, i=1,\cdots,m$.

Let $k\geq 0$ and $i_k=\mbox{mod}(k,m)+1$.
Applying the optimality condition for \eqref{eq:3.8}--\eqref{eq:3.9} and repeating \eqref{eq:3.10}, we have
\begin{align}\label{eq:3.13}
  \begin{cases}
    \frac{1}{\tau}(u^{k+1}-u^k)+\frac{1}{m} q^{k+1}+D_{i_k}^T\lambda_{i_k}^k+\rho D_{i_k}^T(D_{i_k}u^{k+1}-d_{i_k}^{k}) =0,\\
    p_{i_k}^{k+1}-\lambda_{i_k}^k-\rho(D_{i_k}u^{k+1}-d_{i_k}^{k+1})=0,\\
    \lambda^{k+1}_{i_k}=\lambda_{i_k}^{k}+\rho(D_{i_k}u^{k+1}-d_{i_k}^{k+1}),
  \end{cases}
\end{align}
where $q^{k+1}\in \partial F(u^{k+1})$ and $p_{i_k}^{k+1}\in \partial \|d_{i_k}^{k+1}\|_\infty^p$.
Denote
\[
u^k_e=u^k-\bar{u},\quad
d^k_{i,e}=d^k_i - \bar{d}_i,\quad
\lambda^k_{i,e}=\lambda^k_i - \bar{\lambda}_i.
\]
Then by \eqref{eq:3.12} and \eqref{eq:3.13},
\begin{align}\label{eq:3.14}
  \begin{cases}
    \frac{1}{\tau}(u^{k+1}_e-u^k_e)+\frac{1}{m} q^{k+1}+D_{i_k}^T\lambda_{i_k}^k+\rho D_{i_k}^T(D_{i_k}u^{k+1}_e-d_{i_k,e}^{k}) =0,\\
    p_{i_k}^{k+1}-\lambda_{i_k}^k-\rho(D_{i_k}u^{k+1}_e-d_{i_k,e}^{k+1})=0,\\
    \frac{1}{\rho}(\lambda^{k+1}_{i_k}-\lambda_{i_k}^{k})-(D_{i_k}u_e^{k+1}-d_{i_k,e}^{k+1})=0,
  \end{cases}
\end{align}
Taking inner products of \eqref{eq:3.14} with $u_e^{k+1}$, $d_{i_k,e}^{k+1}$, and $\lambda_{i_k}^{k}$ and summing the results yields
\begin{align}\label{eq:3.15}
\begin{split}
  \frac{1}{\tau}\langle u_e^{k+1}, u_e^{k+1}-u_e^k\rangle
  &+\frac{1}{m}\langle u_e^{k+1}, q^{k+1}\rangle
  +\langle d_{i_k,e}^{k+1}, p_{i_k}^{k+1}\rangle
  +\frac{1}{\rho}\langle\lambda^{k}_{i_k}, \lambda^{k+1}_{i_k}-\lambda^{k}_{i_k} \rangle\\
   &=\rho \langle D_{i_k}u_e^{k+1},  d_{i_k,e}^{k}-D_{i_k}u_e^{k+1}\rangle
   -\rho \langle d_{i_k,e}^{k+1},  d_{i_k,e}^{k+1}-D_{i_k}u_e^{k+1}\rangle.
\end{split}
\end{align}
By the definition of subdifferential for $F$ and $\|\cdot\|_\infty^p$,
\begin{align*}
    -&\frac{1}{m}\langle u_e^{k+1}, q^{k+1}\rangle
    -\langle d_{i_k,e}^{k+1}, p_{i_k}^{k+1}\rangle
    \leq \frac{1}{m}F(\bar{u})-\frac{1}{m}F(u^{k+1})
   +\|\bar{d}_{i_k}\|_\infty^p - \|d_{i_k}^{k+1}\|_\infty^p  \\
   &=
   \underbrace{\frac{1}{m}F(\bar{u})-\frac{1}{m}F(u^{k+1})
   +\|\bar{d}_{i_k}\|_\infty^p - \|d_{i_k}^{k+1}\|_\infty^p
   - \langle \bar{\lambda}_{i_k}, D_{i_k}u^{k+1}-d_{i_k}^{k+1} \rangle}_{=:A_{i_k}^{k+1}}
   + \langle \bar{\lambda}_{i_k}, D_{i_k}u^{k+1}-d_{i_k}^{k+1} \rangle.
\end{align*}
Substituting it into \eqref{eq:3.15} and recalling the third equality in \eqref{eq:3.13} to find
\begin{align}\label{eq:3.16}
\begin{split}
  \frac{1}{\tau}\langle u_e^{k+1}, u_e^{k+1}&-u_e^k\rangle
  +\frac{1}{\rho}\langle\lambda^{k}_{i_k,e}, \lambda^{k+1}_{i_k,e}-\lambda^{k}_{i_k,e} \rangle\\
   &\leq \rho \langle D_{i_k}u_e^{k+1},  d_{i_k,e}^{k}-D_{i_k}u_e^{k+1}\rangle
   -\rho \langle d_{i_k,e}^{k+1},  d_{i_k,e}^{k+1}-D_{i_k}u_e^{k+1}\rangle
   + A_{i_k}^{k+1}.
\end{split}
\end{align}
For the left-hand side of \eqref{eq:3.16},
\[
\frac{1}{\tau}\langle u_e^{k+1}, u_e^{k+1}-u_e^k\rangle
= \frac{1}{2\tau}\|u_e^{k+1}\|^2
- \frac{1}{2\tau}\|u_e^{k}\|^2
+\frac{1}{2\tau}\|u^{k+1}-u^{k}\|^2,
\]
and
\[
\frac{1}{\rho}\langle\lambda^{k}_{i_k,e}, \lambda^{k+1}_{i_k,e}-\lambda^{k}_{i_k,e} \rangle
=\frac{1}{2\rho}(\|\lambda^{k+1}_{i_k,e}\|^2-\|\lambda^{k}_{i_k,e}\|^2)-\frac{\rho}{2}\|D_{i_k}u_e^{k+1}-d_{i_k,e}^{k+1}\|^2,
\]
where we use the third equality in \eqref{eq:3.14}.
The right-hand side of \eqref{eq:3.16}
\[
=\frac{\rho}{2}\left(\|d_{i_k,e}^{k}\|_2^2-\|d_{i_k,e}^{k+1}\|^2\right)-\frac{\rho}{2}\|D_{i_k}u_e^{k+1}-d_{i_k,e}^{k}\|^2
  -\frac{\rho}{2}\|D_{i_k}u_e^{k+1}-d_{i_k,e}^{k+1}\|^2 + A_{i_k}^{k+1},
\]
where by \eqref{eq:3.12},
\begin{align*}
\|D_{i_k}u_e^{k+1}-d_{i_k,e}^{k}\|^2
=\|D_{i_k}u^{k+1}-d_{i_k}^{k}\|^2
=\|D_{i_k}u^{k+1}-d_{i_k}^{k+1}\|^2
&+\|d_{i_k}^{k+1}-d_{i_k}^{k}\|^2 \\
&+2\langle D_{i_k}u^{k+1}-d_{i_k}^{k+1}, d_{i_k}^{k+1}-d_{i_k}^{k}\rangle,
\end{align*}
Substituting all the results into \eqref{eq:3.16}, we have
\begin{align}\label{eq:3.17}
\begin{split}
  \frac{1}{2\tau}\|u_e^{k+1}\|^2 +\frac{\rho}{2}\|d_{i_k,e}^{k+1}\|^2
  + \frac{1}{2\rho}\|\lambda^{k+1}_{i_k,e}\|^2
  +\frac{1}{2\tau}\|u^{k+1}-u^{k}\|^2
  +\frac{\rho}{2}\|D_{i_k}u^{k+1}-d_{i_k}^{k+1}\|^2
  +\frac{\rho}{2}\|d_{i_k}^{k+1}-d_{i_k}^{k}\|^2 \\
  \leq \frac{1}{2\tau}\|u_e^{k}\|^2 +\frac{\rho}{2}\|d_{i_k,e}^{k}\|^2
  +\frac{1}{2\rho}\|\lambda^{k}_{i_k,e}\|^2
  -\rho\langle D_{i_k}u^{k+1}-d_{i_k}^{k+1}, d_{i_k}^{k+1}-d_{i_k}^{k}\rangle
  +A_{i_k}^{k+1}.
\end{split}
\end{align}

Let us consider an iteration cycle, i.e., $k=cm+i-1$ for $i=1,\cdots,m$, where $c\geq 0$ is an integer.
Clearly, according to the update rule for $d$ and $\lambda$ in Algorithm \ref{alg1},
\[
d_{i_k}^k=d_{i}^{cm+i-1}=d_{i}^{(c-1)m+i},\quad
\lambda_{i_k}^k=\lambda_{i}^{cm+i-1}=\lambda_{i}^{(c-1)m+i}.
\]
Here we assume $(c-1)m+i=0$ when $c=0$.
With the new notation, \eqref{eq:3.17} becomes
\begin{align}\label{eq:3.18}
\begin{split}
  \frac{1}{2\tau}\|u_e^{cm+i}\|^2 +\frac{\rho}{2}\|d_{i,e}^{cm+i}\|^2
  + \frac{1}{2\rho}\|\lambda^{cm+i}_{i,e}\|^2
  +\frac{1}{2\tau}\|u^{cm+i}-u^{cm+i-1}\|^2
  +\frac{\rho}{2}\|D_{i}u^{cm+i}-d_{i}^{cm+i}\|^2 \\
  +\frac{\rho}{2}\|d_{i}^{cm+i}-d_{i}^{(c-1)m+i}\|^2
  \leq \frac{1}{2\tau}\|u_e^{cm+i-1}\|^2 +\frac{\rho}{2}\|d_{i,e}^{(c-1)m+i}\|^2
  +\frac{1}{2\rho}\|\lambda^{(c-1)m+i}_{i,e}\|^2\\
  -\rho\langle D_{i}u^{cm+i}-d_{i}^{cm+i}, d_{i}^{cm+i}-d_{i}^{(c-1)m+i}\rangle
  +A_i^{cm+i}.
\end{split}
\end{align}
We estimate the last two terms of \eqref{eq:3.18}.
By the second and the third equalities of \eqref{eq:3.13} and the convexity of $\|\cdot\|_\infty^p$, the inner product on the right-hand side of \eqref{eq:3.18}
\[
=\frac{1}{\rho}\langle \lambda_i^{cm+i}-\lambda_i^{(c-1)m+i}, d_{i}^{cm+i}-d_{i}^{(c-1)m+i}\rangle
=\frac{1}{\rho}\langle p_i^{cm+i}-p_i^{(c-1)m+i}, d_{i}^{cm+i}-d_{i}^{(c-1)m+i}\rangle
\geq 0.
\]
To estimate $\sum_{i=1}^{m} A_i^{cm+i}$,
we observe from Lemma \ref{le:convex} that there exists a $u\in\mathbb{R}^n$ such that
\[
\sum_{i=1}^{m}
\frac{1}{m}F(u^{cm+i})
-\frac{1}{m}F(u)
+\langle \bar{\lambda}_{i}, D_{i}u^{cm+i}-D_{i}u \rangle
=0.
\]
Consequently, by \eqref{eq:3.12} and \eqref{eq:3.11},
\begin{align*}
 \sum_{i=1}^{m} A_i^{cm+i}
 &=\sum_{i=1}^{m}\frac{1}{m}F(\bar{u})-\frac{1}{m}F(u^{cm+i})
   +\|\bar{d}_{i}\|_\infty^p - \|d_{i}^{cm+i}\|_\infty^p
   - \langle \bar{\lambda}_{i}, D_{i}u^{cm+i}-d_{i}^{cm+i} \rangle\\
&=\sum_{i=1}^{m}\frac{1}{m}F(\bar{u})-\frac{1}{m}F(u)
   +\|\bar{d}_{i}\|_\infty^p - \|d_{i}^{cm+i}\|_\infty^p
   - \langle \bar{\lambda}_{i}, D_{i}u-d_{i}^{cm+i} \rangle\\
   &=\mathcal{L}(\bar{u},\bar{d}_1\cdots \bar{d}_m;\bar{\lambda}_1\cdots \bar{\lambda}_m)
  - \mathcal{L}({u},{d}_1^{cm+1} \cdots {d}_m^{cm+m};\bar{\lambda}_1\cdots \bar{\lambda}_m)
  \leq 0.
\end{align*}

By summing \eqref{eq:3.18} first over $i = 1, \dots, m$ and then over $c = 0, \dots, \infty$,
we obtain that the sequence $(u^{(c+1)m}, d_i^{cm+i}, \lambda_i^{cm+i})$ is bounded for any $i=1,\cdots,m$, $c\geq 0$
and
\[
\lim_{c\rightarrow\infty}\|u^{cm+i}-u^{cm+i-1}\|^2=0,\quad
\lim_{c\rightarrow\infty}\|d_{i}^{cm+i}-d_{i}^{(c-1)m+i}\|^2=0,\quad
\lim_{c\rightarrow\infty}\|D_{i}u^{cm+i}-d_{i}^{cm+i}\|^2=0,
\]
for any $i=1,\cdots,m$.
Then the sequence $(u^k, d_1^k,\cdots, d_m^k,\lambda_1^k,\cdots,\lambda_m^k)$ is bounded for any $k\geq 0$ and
\[
\lim_{k\rightarrow\infty}\|u^{k+1}-u^{k}\|^2
    =\lim_{k\rightarrow\infty}\|d_i^{k+1}-d_i^{k}\|^2
    =\lim_{k\rightarrow\infty}\|D_iu^{k}-d_i^k\|^2
    =0.
\]
This proves \eqref{eq:th3:1}.

By the boundedness of $(u^k, d_1^k,\cdots, d_m^k,\lambda_1^k,\cdots,\lambda_m^k)$ and \eqref{eq:3.13}, $q^{k+1}$ and $p_{i_k}^{k+1}$ are also bounded.
There exist convergent subsequences (still denoted by themselves) and cluster points
\[
(u^\infty, d_1^\infty, \cdots, d_m^\infty, \lambda_1^\infty,\cdots,\lambda_m^\infty)\quad \mbox{and}\quad (q^\infty,p_1^\infty,\cdots,p_m^\infty).
\]
Clearly,
\[
q^\infty\in\partial F(u^\infty)\quad \mbox{and } \quad
p_i^\infty\in \partial \|d_i^\infty\|^p_\infty, ~i=1,\cdots, m.
\]
Passing to the limit $k\rightarrow\infty$ in \eqref{eq:3.13} to find
\begin{align*}
  \begin{cases}
    \frac{1}{m}q^\infty+D_i^T\lambda_i^\infty=0,\quad &i=1,\cdots, m, \\
    p_i^\infty-\lambda_i^\infty=0, \quad &i=1,\cdots, m,\\
    D_iu^\infty-d_i^\infty=0,\quad &i=1,\cdots, m.
  \end{cases}
\end{align*}
Thus $(u^\infty, d_1^\infty, \cdots, d_m^\infty, \lambda_1^\infty,\cdots,\lambda_m^\infty)$ is a saddle point of $\mathcal{L}(u,d_1\cdots d_m;\lambda_1\cdots \lambda_m)$.

Notice that the convergence $u^k\rightarrow u^\infty$ and $d_i^k\rightarrow d_i^\infty$, $i=1,\cdots,m$, hold for the whole sequence (not just a subsequence).
We have
\begin{align*}
\lim_{k\rightarrow\infty} F(u^k)+\sum_{i=1}^{m}\|d_i^{k}\|_\infty^p
=F(u^\infty)+\sum_{i=1}^{m}\|d_i^\infty\|_\infty^p
=\mathcal{L}(u^\infty, d_1^\infty, \cdots, d_m^\infty; \lambda_1^\infty,\cdots,\lambda_m^\infty)
=\inf_{u\in\mathbb{R}^n}J(u).
\end{align*}
This proves \eqref{eq:th3:2}.
\end{proof}

\section{Numerical experiments}
In this section, we present applications of the hypergraph $p$-Laplacian \eqref{eq:1.1}--\eqref{eq:1.1a} (HpL) to missing data recovery.
Specifically, we set $F(u) = \mathbb{I}_L(u)$ and consider the tasks of image sparse inpainting and semi-supervised learning.
For comparison, we use the graph Laplacian (GL), i.e., \eqref{eq:GL} with $p = 2$. Missing data recovery using GL is equivalent to solving a symmetric positive definite linear system, which is typically sparse and can be solved efficiently.
For HpL, we compare the proposed Algorithm \ref{alg1} with the stochastic primal-dual hybrid gradient algorithm (sPDHG) \cite{shi2025hypergraph} and the subgradient descent method (SGD) \cite{zhang2017re}.

In Algorithm \ref{alg1}, two parameters, $\tau$ and $\rho$, need to be tuned. We adopt a diminishing step size $\tau_{c+1} = \tau_c / r$ at the $(c+1)$-th iteration cycle and set $\tau_0 = 1$.
The parameters $\rho$ and $r$ depend on the task and will be specified below.
All experiments are conducted in MATLAB 2020b on a desktop equipped with an Intel Core i7-8700 3.20 GHz CPU.
The code implementing the experiments is available at \url{https://github.com/kshi13/OS-HpL}.

\subsection{Image sparse inpainting}\label{section:inpainting}
In image sparse inpainting, we are given the intensity $f_{i,j}$ of a grayscale image $f\in \mathbb{R}^{N_1\times N_2}$ on some sparse pixels $(i,j)\in L\subset \{(i,j): 1\leq i\leq N_1, 1\leq j\leq N_2\}$. The goal is to interpolate the image $f$ based on these partially observed pixels.

To employ the graph/hypergraph model for image processing, a graph/hypergraph must first be constructed from the given image. The construction is not unique and may vary depending on the specific application.
Here we follow the approach of \cite{shi2017weighted,shi2025hypergraph}, which treats each pixel as a vertex and constructs edges or hyperedges based on the semilocal similarity between pixels.
Specifically, we define a point cloud from the image $f$ by
\begin{equation*}
  \Omega_n(f)=\left\{\bar{P}_{i,j}(f): 1\leq i\leq N_1, 1\leq j\leq N_2\right\}\subset \mathbb{R}^{s_1\times s_2},
\end{equation*}
where
\begin{equation*}
  \bar{P}_{i,j}(f)=[P_{i,j}(f), \lambda\bar{x}], \quad \bar{x}=\left({i}/{N_1}, {j}/{N_2}\right),
\end{equation*}
includes the local coordinate $\bar{x}$
and $P_{i,j}(f)$ is the rectangle image patch centered at pixel $(i,j)$ with size $s_1\times s_2$.
Using the $\ell^2$ norm in $\R^{s_1\times s_2 +2}$ for $\Omega_n(f)$,
the weight function between two patches $\bar{P}_{i,j}(f)$ and $\bar{P}_{k,l}(f)$ is defined by
\begin{equation}\label{eq:weight}
  w_{\bar{P}_{i,j}(f),\bar{P}_{k,l}(f)}=\exp\left(-\frac{\|\bar{P}_{i,j}(f)-\bar{P}_{k,l}(f)\|^2}
  {\sigma(\bar{P}_{i,j}(f))^2}\right),
\end{equation}
where $\sigma(\bar{P}{i,j}(f))$ denotes the distance between $\bar{P}_{i,j}(f)$ and its $k_n$-th nearest neighbor.
Finally, a $k_n$-nearest neighbor (k-NN) graph/hypergraph is constructed from the point cloud.
Let $u$ be a function on $\Omega_n(f)$ such that
\begin{equation*}
  u(\bar{P}_{i,j}(f))=f_{i,j}.
\end{equation*}
The image inpainting problem is then reformulated as finding the function $u$ on $\Omega_n(f)$.

\begin{algorithm}[tbp]
  \caption{Image sparse inpainting.}
  \label{alg3}
  \begin{algorithmic}
  \REQUIRE Partially observed pixels $\{f_{i,j},(i,j)\in L\}$, initial guess $u^0$.
  \FOR { $k=0:K-1$}
  \STATE Build point cloud $\Omega_n(u^k)$, calculate the weight \eqref{eq:weight}, and build the graph/hypergraph.
  \STATE Construct the training set $\{(\bar{P}_{i,j}(u^k), f_{i,j}): (i,j)\in L\}$.
  \STATE Solve $u^{k+1}$ on $\Omega_n(u^k)$ by the training set.
  \ENDFOR
  \RETURN $u^{K-1}$.
  \end{algorithmic}
\end{algorithm}

\begin{figure}[tbp]
  \centering
  \begin{tabular}{@{}c@{~}c@{~}c@{~}c@{~}c@{~}c@{}}
    \includegraphics[width=.1\textwidth]{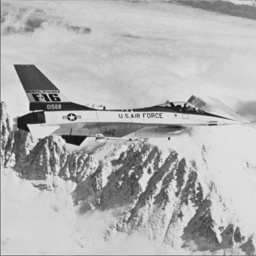} &
    \includegraphics[width=.1\textwidth]{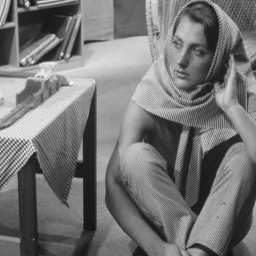} &
    \includegraphics[width=.1\textwidth]{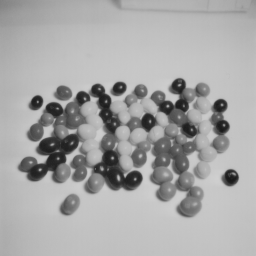} &
    \includegraphics[width=.1\textwidth]{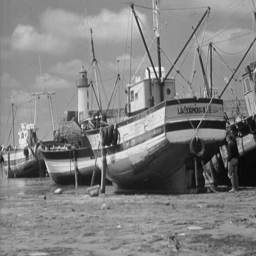} &
    \includegraphics[width=.1\textwidth]{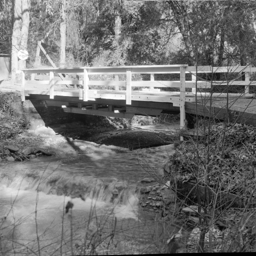} &
    \includegraphics[width=.1\textwidth]{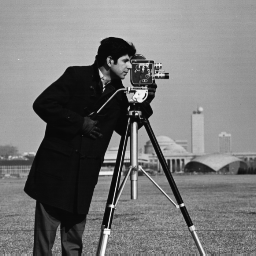} \\
    \footnotesize\emph{Airplane} & \footnotesize\emph{Barbara}  & \footnotesize\emph{Beans}  & \footnotesize\emph{Boat}  & \footnotesize\emph{Bridge}  & \footnotesize\emph{Cameraman} \\
    \includegraphics[width=.1\textwidth]{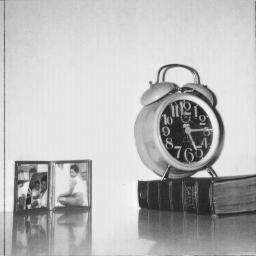} &
    \includegraphics[width=.1\textwidth]{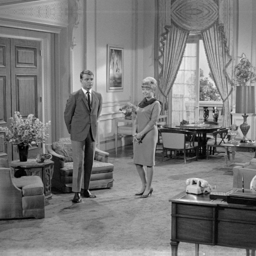} &
    \includegraphics[width=.1\textwidth]{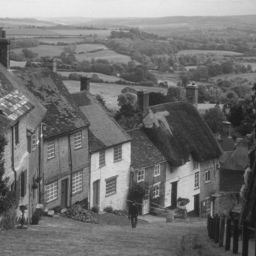} &
    \includegraphics[width=.1\textwidth]{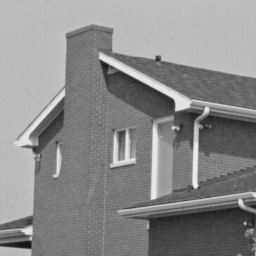} &
    \includegraphics[width=.1\textwidth]{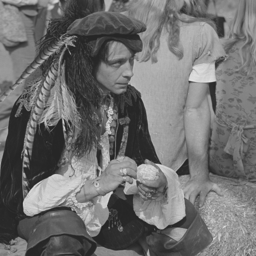} &
    \includegraphics[width=.1\textwidth]{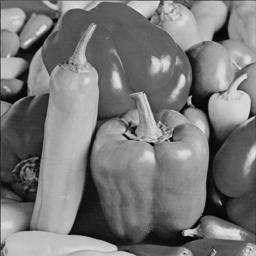} \\
    \footnotesize\emph{Clock} & \footnotesize\emph{Couple} & \footnotesize\emph{Hill} & \footnotesize\emph{House} & \footnotesize\emph{Man} & \footnotesize\emph{Peppers} \\
  \end{tabular}
    \caption{Test images ($256\times256$) for image inpainting.}
    \label{fig:testimage}
  \end{figure}

In image inpainting, the original image $f$ is unknown. We begin with an initial guess of $f$ and iteratively update it using the restored image. See Algorithm \ref{alg3} for details.
Throughout this section, we set $k_n=21$, $s_1=s_2=11$, and $\lambda=10$.
For GL, the number of iterations in Algorithm \ref{alg3} is chosen as $K = 15$.
For the hypergraph $p$-Laplacian \eqref{eq:1.1}--\eqref{eq:1.1a}, we initialize it with the solution obtained from GL and set $K = 1$.
The test images are shown in Figure \ref{fig:testimage}.

Before proceeding with image inpainting experiments, we first test HpL with $p=1$ and $p=2$, i.e., the hypergraph total variation and the hypergraph Laplacian.
We assume that the original image $f$ is known, which allows us to compute the ground truth weights \eqref{eq:weight}. Taking Barbara as the test image, Figure \ref{fig:convergence} presents the inpainting results and the corresponding energy for $p=1,2$ under sampling rates of $s=15\%, 5\%$, respectively.
Although the case $p=1$ and $p=2$ are theoretically distinct, we do not observe a significant difference in the results. When $p=1$, a slightly higher PSNR is achieved.
Unlike the case of graph or continuum PDEs, HpL is non-differentiable for any $p\geq 1$.
Algorithm \ref{alg1} is the fastest when $p=1$, since the corresponding $d$-subproblem is the easiest to solve.
In the subsequent experiments, for the sake of comparison, we set $p=2$.

\begin{figure}[tbp]
  \centering
  \begin{tabular}{@{}c@{~}c@{~~~}c@{~}c@{}}
  \includegraphics[width=.2\textwidth]{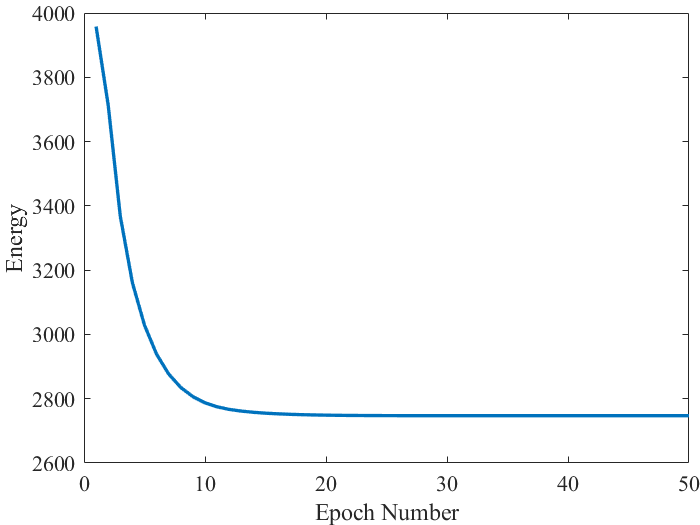}&
  \includegraphics[width=.2\textwidth]{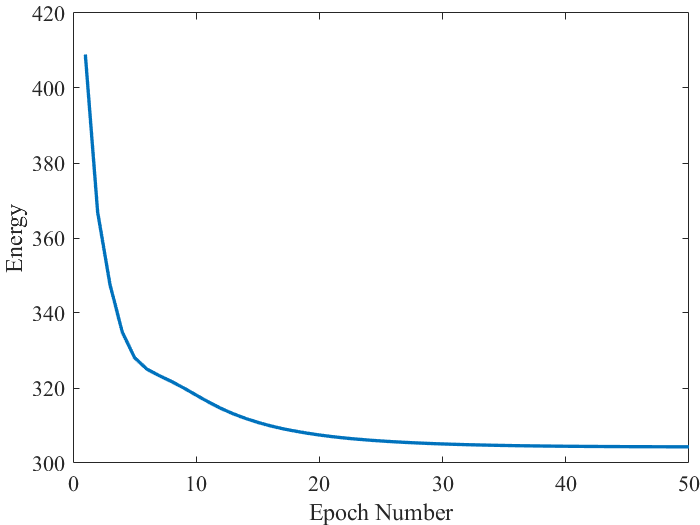}&
  \includegraphics[width=.2\textwidth]{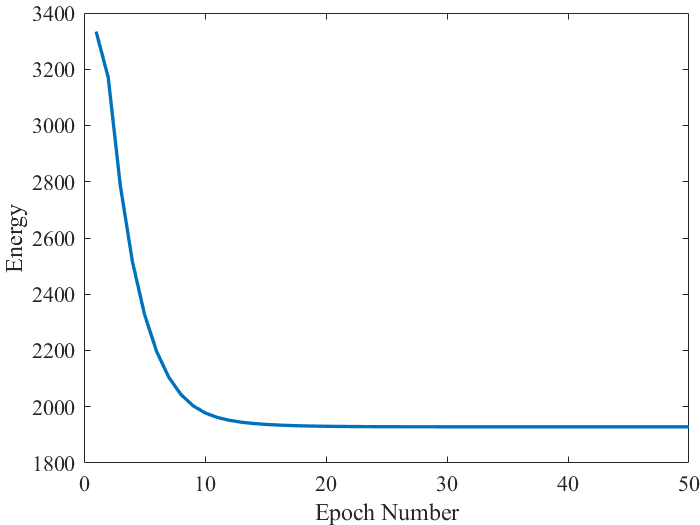}&
  \includegraphics[width=.2\textwidth]{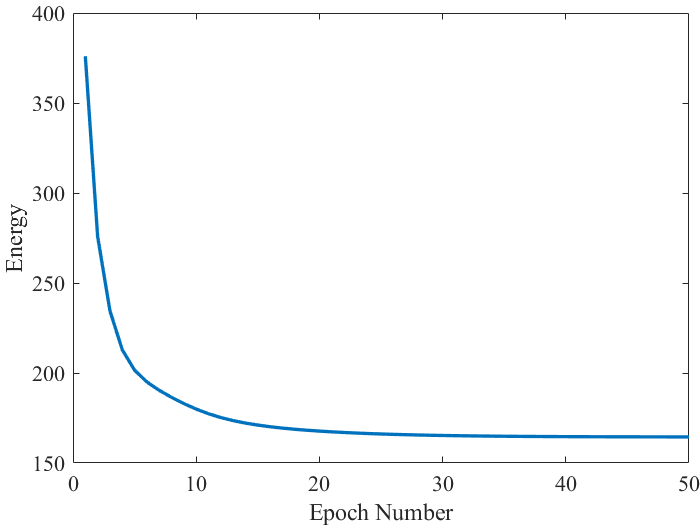}\\
  \begin{overpic}[width=.2\textwidth]{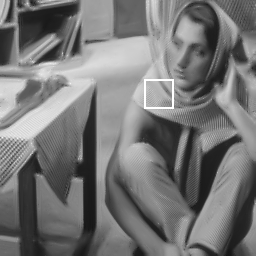}\put(50,0){\includegraphics[width=.1\textwidth]{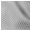}}\end{overpic}&
  \begin{overpic}[width=.2\textwidth]{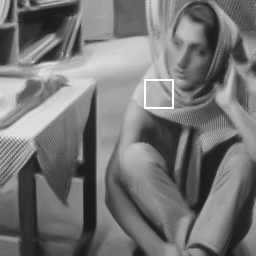}\put(50,0){\includegraphics[width=.1\textwidth]{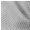}}\end{overpic}&
  \begin{overpic}[width=.2\textwidth]{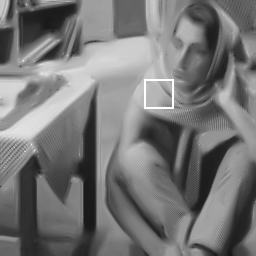}\put(50,0){\includegraphics[width=.1\textwidth]{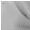}}\end{overpic}&
  \begin{overpic}[width=.2\textwidth]{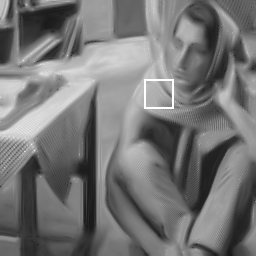}\put(50,0){\includegraphics[width=.1\textwidth]{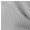}}\end{overpic}\\
  {\footnotesize\emph{s=15\%, $p=1$, 28.91dB}} &
  {\footnotesize\emph{s=15\%, $p=2$, 28.65dB}} &
  {\footnotesize\emph{s=5\%, $p=1$, 25.29dB}} &
  {\footnotesize\emph{s=5\%, $p=2$, 24.90dB}}
  \end{tabular}
    \caption{Restored results of Algorithm \ref{alg1} for $p=1,2$ and sampling rates $s=15\%, 5\%$.
    The first row shows the energy decay over iteration cycles.
    $p=1$: $\rho=1$, $r=1.5$;  $p=2$: $\rho=0.1$, $r=1.2$.
  }
    \label{fig:convergence}
  \end{figure}

\begin{table*}[htbp]
\centering
\scriptsize
\setlength{\tabcolsep}{9pt}
\renewcommand{\arraystretch}{0.9}
\caption{PSNR and SSIM comparisons of GL and HpL under different sampling rates.}
\label{table:PSNR}
\begin{tabular}{llcccccc}
\toprule
 &  & \multicolumn{3}{c}{PSNR} & \multicolumn{3}{c}{SSIM} \\
\cmidrule(lr){3-5} \cmidrule(lr){6-8}
Image & Method & 5\% & 10\% & 15\% & 5\% & 10\% & 15\% \\
\midrule
\multirow{3}{*}{Airplane}
& GL & 20.90 & 22.40 & 23.83 & 0.3125 & 0.4531 & 0.5447 \\
& sPDHG & 21.06 & 22.81 & 24.41 & 0.3614 & 0.5054 & 0.5973 \\
& Alg.~\ref{alg1} & 21.08 & 22.84 & 24.45 & 0.3744 & 0.5174 & 0.6056 \\
\midrule
\multirow{3}{*}{Barbara}
& GL & 22.40 & 24.60 & 26.13 & 0.4294 & 0.5718 & 0.6593 \\
& sPDHG & 22.91 & 25.23 & 27.00 & 0.4807 & 0.6190 & 0.7016 \\
& Alg.~\ref{alg1} & 22.98 & 25.28 & 27.03 & 0.4923 & 0.6264 & 0.7065 \\
\midrule
\multirow{3}{*}{Beans}
& GL & 22.84 & 24.49 & 26.22 & 0.4369 & 0.5359 & 0.6082 \\
& sPDHG & 23.59 & 25.20 & 27.36 & 0.4814 & 0.5759 & 0.6451 \\
& Alg.~\ref{alg1} & 23.75 & 25.29 & 27.46 & 0.4908 & 0.5791 & 0.6454 \\
\midrule
\multirow{3}{*}{Boat}
& GL & 21.80 & 23.23 & 24.54 & 0.3128 & 0.4422 & 0.5488 \\
& sPDHG & 21.99 & 23.49 & 25.01 & 0.3524 & 0.4891 & 0.5926 \\
& Alg.~\ref{alg1} & 22.01 & 23.50 & 25.08 & 0.3637 & 0.5007 & 0.6016 \\
\midrule
\multirow{3}{*}{Bridge}
& GL & 20.44 & 21.70 & 22.54 & 0.2794 & 0.4049 & 0.4907 \\
& sPDHG & 20.54 & 21.90 & 22.76 & 0.3103 & 0.4442 & 0.5230 \\
& Alg.~\ref{alg1} & 20.52 & 21.88 & 22.75 & 0.3210 & 0.4549 & 0.5327 \\
\midrule
\multirow{3}{*}{Cameraman}
& GL & 21.03 & 21.67 & 23.01 & 0.2424 & 0.3314 & 0.4248 \\
& sPDHG & 21.23 & 21.94 & 23.30 & 0.2760 & 0.3659 & 0.4546 \\
& Alg.~\ref{alg1} & 21.19 & 21.96 & 23.25 & 0.2822 & 0.3694 & 0.4541 \\
\midrule
\multirow{3}{*}{Clock}
& GL & 22.80 & 24.89 & 25.96 & 0.3237 & 0.4431 & 0.5226 \\
& sPDHG & 23.07 & 25.42 & 26.56 & 0.3547 & 0.4788 & 0.5567 \\
& Alg.~\ref{alg1} & 23.04 & 25.47 & 26.53 & 0.3604 & 0.4848 & 0.5622 \\
\midrule
\multirow{3}{*}{Couple}
& GL & 21.76 & 23.48 & 24.40 & 0.3007 & 0.4532 & 0.5549 \\
& sPDHG & 21.92 & 23.91 & 24.83 & 0.3473 & 0.5090 & 0.6031 \\
& Alg.~\ref{alg1} & 21.91 & 23.97 & 24.82 & 0.3588 & 0.5210 & 0.6109 \\
\midrule
\multirow{3}{*}{Hill}
& GL & 23.32 & 25.04 & 26.03 & 0.3183 & 0.4744 & 0.5559 \\
& sPDHG & 23.65 & 25.49 & 26.58 & 0.3644 & 0.5240 & 0.6017 \\
& Alg.~\ref{alg1} & 23.71 & 25.52 & 26.58 & 0.3778 & 0.5348 & 0.6087 \\
\midrule
\multirow{3}{*}{House}
& GL & 25.28 & 27.51 & 29.17 & 0.2642 & 0.3867 & 0.4674 \\
& sPDHG & 25.93 & 28.41 & 30.09 & 0.2978 & 0.4176 & 0.4937 \\
& Alg.~\ref{alg1} & 25.96 & 28.42 & 30.11 & 0.3028 & 0.4201 & 0.4965 \\
\midrule
\multirow{3}{*}{Man}
& GL & 22.21 & 24.04 & 24.95 & 0.3124 & 0.4469 & 0.5313 \\
& sPDHG & 22.49 & 24.55 & 25.43 & 0.3599 & 0.5031 & 0.5806 \\
& Alg.~\ref{alg1} & 22.52 & 24.60 & 25.46 & 0.3719 & 0.5129 & 0.5888 \\
\midrule
\multirow{3}{*}{Peppers}
& GL & 22.29 & 24.18 & 25.85 & 0.4723 & 0.5878 & 0.6742 \\
& sPDHG & 22.84 & 25.04 & 26.74 & 0.5279 & 0.6467 & 0.7219 \\
& Alg.~\ref{alg1} & 22.87 & 25.17 & 26.78 & 0.5373 & 0.6555 & 0.7260 \\
\midrule
\multirow{3}{*}{Average}
& GL & 22.25 & 23.93 & 25.22 & 0.3338 & 0.4610 & 0.5486 \\
& sPDHG & 22.60 & 24.45 & 25.84 & 0.3762 & 0.5066 & 0.5893 \\
& Alg.~\ref{alg1} & 22.63 & 24.49 & 25.86 & 0.3861 & 0.5148 & 0.5949 \\
\bottomrule
\end{tabular}
\end{table*}

We compare Algorithm \ref{alg1} with GL and sPDHG for image inpainting.
Table \ref{table:PSNR} reports the PSNR and SSIM results under different sampling rates.
HpL consistently outperforms GL across all test images and sampling rates, indicating that the hypergraph structure is more suitable than the graph structure for image interpolation problems.
The improvement becomes more pronounced as the sampling rate increases from 5\% to 15\%.
Since Algorithm \ref{alg1} and sPDHG are two solvers for HpL, they have similar performance.
Algorithm \ref{alg1} achieves slightly better SSIM than sPDHG.
The restored results for three test images are shown in Figure \ref{fig:inpainting}.
We observe that HpL suppresses spikes more effectively than GL, which explains its superior performance.

\begin{figure}[t]
  \centering
  \begin{tabular}{@{~}c@{~}c@{~}c@{~}c@{}}
  \begin{overpic}[width=.2\textwidth]{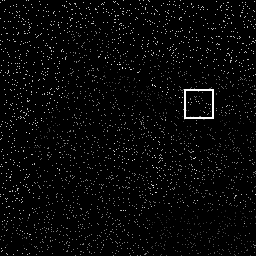}\put(50,0){\includegraphics[width=.1\textwidth]{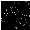}}\end{overpic}
  \begin{overpic}[width=.2\textwidth]{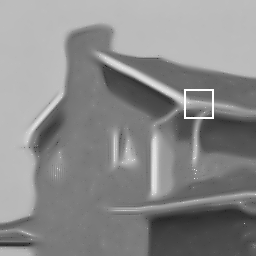}\put(50,0){\includegraphics[width=.1\textwidth]{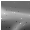}}\end{overpic}
  \begin{overpic}[width=.2\textwidth]{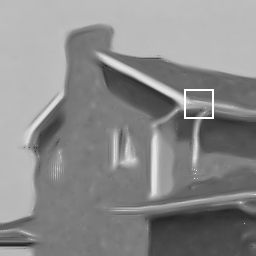}\put(50,0){\includegraphics[width=.1\textwidth]{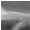}}\end{overpic}
  \begin{overpic}[width=.2\textwidth]{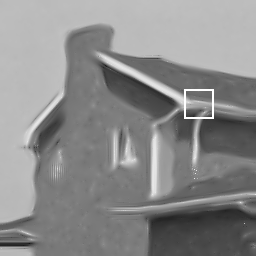}\put(50,0){\includegraphics[width=.1\textwidth]{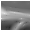}}\end{overpic}\\
  \begin{overpic}[width=.2\textwidth]{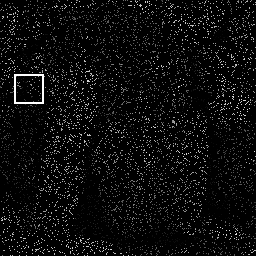}\put(50,0){\includegraphics[width=.1\textwidth]{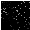}}\end{overpic}
  \begin{overpic}[width=.2\textwidth]{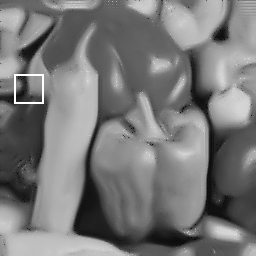}\put(50,0){\includegraphics[width=.1\textwidth]{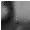}}\end{overpic}
  \begin{overpic}[width=.2\textwidth]{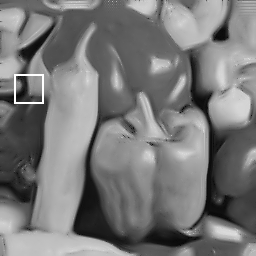}\put(50,0){\includegraphics[width=.1\textwidth]{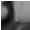}}\end{overpic}
  \begin{overpic}[width=.2\textwidth]{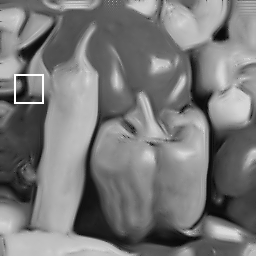}\put(50,0){\includegraphics[width=.1\textwidth]{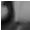}}\end{overpic}\\
  \begin{overpic}[width=.2\textwidth]{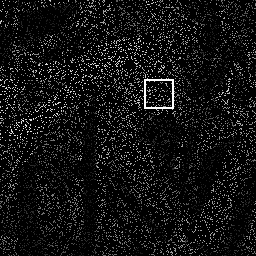}\put(50,0){\includegraphics[width=.1\textwidth]{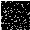}}\end{overpic}
  \begin{overpic}[width=.2\textwidth]{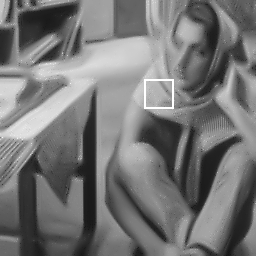}\put(50,0){\includegraphics[width=.1\textwidth]{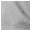}}\end{overpic}
  \begin{overpic}[width=.2\textwidth]{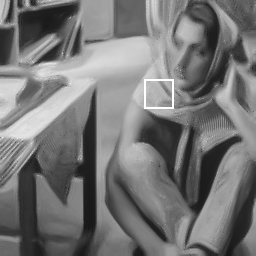}\put(50,0){\includegraphics[width=.1\textwidth]{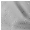}}\end{overpic}
  \begin{overpic}[width=.2\textwidth]{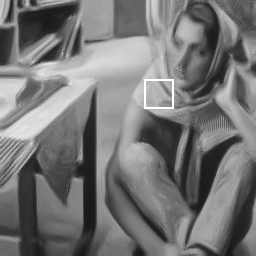}\put(50,0){\includegraphics[width=.1\textwidth]{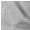}}\end{overpic}\\
  \end{tabular}
    \caption{Restored results of GL and HpL for three test images.
   From left to right: The observed pixels, GL, sPDHG, and Algorithm \ref{alg1}.
   From top to bottom (image/sampling rate): House/5\%, Peppers/10\%, Barbara/15\%. We set $\rho=0.05$ and $r=1.2$ in Algorithm \ref{alg1} for all image inpainting experiments.
  }
    \label{fig:inpainting}
  \end{figure}

\begin{table}[htbp]
\centering
\setlength{\tabcolsep}{10pt}
\renewcommand{\arraystretch}{1.1}
\caption{Average running time  (seconds) over different test images.}
\label{table:time_inpainting}
\begin{tabular}{lccc}
\toprule
 & 5\% & 10\% & 15\% \\
\midrule
GL & 26.1 & 28.1 & 29.3 \\
sPDHG & 55.5 & 74.0 & 91.3 \\
Alg.~\ref{alg1} & 29.7 & 29.6 & 29.6 \\
\bottomrule
\end{tabular}
\end{table}

For both Algorithm \ref{alg1} and sPDHG, we set the iteration number to $10 \times 256^2$, corresponding to 10 full iteration cycles.
Table \ref{table:time_inpainting} reports the average running times of the three algorithms.
GL and Algorithm \ref{alg1} exhibit similar computational costs. This is because we set $K = 15$ for GL and $K = 1$ for Algorithm \ref{alg1} in Algorithm \ref{alg3}. In GL, the computation of the weight function is the most time-consuming step.
Algorithm \ref{alg1} achieves a clear speedup compared with sPDHG, which highlights the main computational advantage of the proposed algorithm.

\begin{table}[H]
  \centering
  \caption{Datasets for semi-supervised learning.}
    \begin{tabular}{lllllll}
    \toprule
    Dataset & MNIST & Citeseer & Cora & Cora & DBLP & Pubmed\\
            &       & citation & authorship & citation & authorship & citation \\
    \midrule
    $|V|$ & 70000 & 3306 & 2078 & 2708 & 41302 & 19717 \\
    $|E_H|$ & 70000 & 1079 & 1072 & 1516 & 22365 & 7963 \\
    \#classes & 10 & 6   & 7    & 7    & 6     & 3 \\
    \bottomrule
    \end{tabular}%
  \label{table:dataset}%
\end{table}

\subsection{Semi-supervised learning}
In the remainder of this section, we evaluate the algorithms for semi-supervised learning on several real-world datasets summarized in Table \ref{table:dataset}.
MNIST consists of 70,000 grayscale images of handwritten digits, where each image has size $28 \times 28$ and represents a digit from 0 to 9 \cite{lecun1998gradient}. We treat each image as a vertex in a point cloud embedded in $\mathbb{R}^{28 \times 28}$ and construct the corresponding graph/hypergraph in the same manner as described in Section \ref{section:inpainting}.
Citeseer, Cora, DBLP, and Pubmed \cite{sen2008collective} are hypergraph datasets, where vertices represent documents and hyperedges follow from co-citation or co-authorship relationships. For these hypergraph datasets, we assign uniform weights $w_{i,j} = 1$.

As in the previous section, we compare Algorithm \ref{alg1} with GL and sPDHG on MNIST.
Note that when hyperedges have large cardinality, sPDHG may run out of memory.
We instead adopt SGD for hypergraph datasets.
Choosing an appropriate step size and stopping criterion for SGD is nontrivial. We use an empirically selected step size as suggested by the authors \cite{zhang2017re} and set the iteration number to 500.
Both Algorithm \ref{alg1} and sPDHG are convergent.
We terminate the algorithms when the relative error at the $(c+1)$-th cycle satisfies
\begin{equation*}
  \frac{\|u^{c+1}-u^c\|^2}{\|u^{c+1}\|^2}\leq \varepsilon.
\end{equation*}
We set $\varepsilon=5\times 10^{-4}$ for MNIST and $\varepsilon=5\times 10^{-5}$ for the hypergraph datasets.
The clique extension of the hypergraph datasets is used for GL.

Table \ref{table:SSL} presents the test accuracy of different algorithms. The results are obtained by averaging over 10 independent runs.
From the results on MNIST, we observe that GL fails to produce meaningful results under extremely low labeling rates. In contrast, HpL does not suffer from this issue. Moreover, Algorithm \ref{alg1} achieves significantly higher prediction accuracy than both sPDHG and GL.
For the hypergraph datasets, the performance gap among different algorithms is less pronounced. Nevertheless, Algorithm \ref{alg1} still attains the highest mean accuracy.

\begin{table}[htbp]
\centering
\caption{Accuracy (\%) of semi-supervised learning on test datasets.}
\label{table:SSL}
\begin{tabular}{cccccccccc}
\toprule
 &  & MNIST &  &  & Citeseer & Cora & Cora & DBLP & Pubmed \\
 &  &  &  &  & citation & author & citation & author & citation \\
\midrule
\multirow{3}{*}{0.05\%}
& GL & 17.8
& \multirow{3}{*}{5\%}
& GL & 58.6 & 53.9 & 68.4 & 74.9 & 72.4 \\
& sPDHG & 65.4
& & SGD & 58.9 & 51.5 & 69.5 & 75.1 & 79.2 \\
& Alg.~\ref{alg1} & 75.8
& & Alg.~\ref{alg1} & 58.9 & 53.8 & 71.3 & 75.8 & 78.5 \\
\midrule
\multirow{3}{*}{0.5\%}
& GL & 86.7
& \multirow{3}{*}{10\%}
& GL & 64.8 & 64.0 & 77.7 & 79.4 & 78.8 \\
& sPDHG & 89.8
& & SGD & 64.9 & 61.5 & 77.5 & 79.0 & 82.1 \\
& Alg.~\ref{alg1} & 93.4
& & Alg.~\ref{alg1} & 64.6 & 64.2 & 77.9 & 79.9 & 82.1 \\
\bottomrule
\end{tabular}
\end{table}

\begin{table}[htbp]
\centering
\caption{Average running time (seconds) for semi-supervised learning.}
\label{table:time_SSL}
\begin{tabular}{c c c c c c cc}
\toprule
 & MNIST &  & Citeseer & Cora & Cora & DBLP & Pubmed \\
 &  &  & citation & author & citation & author & citation \\
\midrule
GL & 31.0 & GL & 0.02 & 0.03 & 0.02 & 0.84 & 0.14 \\
sPDHG & 742.5 & SGD & 0.5 & 0.8 & 0.4 & 86.9 & 13.0 \\
Alg.\ \ref{alg1} & 122.3 & Alg.\ \ref{alg1} & 0.9 & 1.1 & 1.6 & 21.3 & 4.9 \\
\bottomrule
\end{tabular}
\end{table}

The average running time of different algorithms are shown in Table \ref{table:time_SSL}.
On Citeseer, Cora, and PubMed, the number of hyperedges is relatively small. All three algorithms can be executed efficiently.
For the two largest datasets, MNIST and DBLP, we observe that the proposed algorithm is several times faster than both sPDHG and SGD.
Note that for hypergraph datasets, we use uniform weights $w_{i,j} = 1$, which avoids solving a linear system in the $u$-subproblem.

\section{Conclusion}
In this paper, we proposed an operator-splitting method for the non-smooth hypergraph $p$-Laplacian.
It processed hyperedges sequentially in a Gauss–Seidel fashion, thereby simplifying the computational cost.
The convergence of the proposed algorithm was established.
Numerical experiments on image sparse inpainting and semi-supervised learning demonstrated that the proposed method exhibited superior computational efficiency compared with the stochastic primal–dual algorithm and the subgradient descent method, particularly on large-scale hypergraph datasets.

\section*{Acknowledgments}
The authors acknowledge support from DESY (Hamburg, Germany), a member of the Helmholtz Association HGF.









%
%

\bibliographystyle{unsrt}
\bibliography{references}

\end{document}